\theoremstyle{plain}
\numberwithin{equation}{section}
\theoremstyle{plain}
\newtheorem{theorem}{Theorem}[section]
\newtheorem{lemma}{Lemma}[section]
\begin{document}

\title[Approximating the Rosenblatt sheet by multiple Wiener integrals]
{An optimal approximation of Rosenblatt sheet by multiple Wiener integrals${}^{*}$}

\author[G. Shen,  Q. Yu ]
{Guangjun Shen,     Qian Yu }


\footnote[0]{${}^{*}$ This research is supported by the National
Natural Science Foundation of China (11271020).
  }
\date{}
\keywords{Rosenblatt sheet;  approximation; Wiener integrals.} \subjclass[2000]{60G18, 60H05.}

\maketitle

\date{}

\begin{center}
{\it  Department of Mathematics, Anhui Normal University,
 Wuhu 241000, China.}
\end{center}

\maketitle

\begin{abstract}
Let $Z^{\alpha,\beta}$ be the Rosenblatt sheet with the representation
$$
Z^{\alpha,\beta}(t,s)=\int^t_0\int^s_0\int^t_0\int^s_0Q^\alpha(t,y_1,y_2)Q^\beta(s,u_1,u_2)B(dy_1,du_1)B(dy_2,du_2)
$$
where $B$ is a   Brownian sheet, $\frac12<\alpha,\beta<1$, $Q^\alpha$ and $Q^\beta$ are the given kernel.  In this paper,  we contruct  multiple Wiener integrals of the form
\begin{align*}
\int^t_0\int^s_0\int^t_0\int^s_0&[k_1(y_1,y_2)^{-\frac12\alpha}(u_1,u_2)^{-\frac12\beta}+k_2(y_1\vee y_2)^{\frac12\alpha}(y_1\wedge y_2)^{-\frac12\alpha}|y_1-y_2|^{\alpha-1}\\
&\cdot(u_1\vee u_2)^{\frac12\beta}(u_1\wedge u_2)^{-\frac12\beta}|u_1-u_2|^{\beta-1}]B(dy_1,du_1)B(dy_2,du_2),~~k_1,k_2\geq0,
\end{align*}
and obtain  an optimal approximation of $Z^{\alpha,\beta}(t,s)$.
\end{abstract}

\section{Introduction}\label{sec1}
Self-similar processes are stochastic processes that are invariant in distribution under
a suitable scaling of time and space. This property is crucial in applications
such as network traffic analysis, mathematical finance, astrophysics, hydrology and
image processing. For this reason, their analysis has long constituted an important
research direction in probability theory. The Hermite process  is an interesting class of self-similar processes with long
range dependence, it is given as limits of the so called {\it
Non-Central Limit Theorem}~ studied in Dobrushin and Major
\cite{DM}, Taqqu \cite{Taqqu}. Let us briefly recall the general
context.

Denote by  $H_j(x)$  the Hermite polynomial of order $j$ defined by
$$H_j(x)=(-1)^je^{\frac{x^2}{2}}\frac{d^j}{dx^j}e^{\frac{-x^2}{2}},  \quad j=1,2,...$$
with $H_0(x)=1$, and let
the Borel function $g:\mathbb{R}\rightarrow\mathbb{R}$ satisfy $E(g(\xi_0))=0,E(g(\xi_0)^2)<\infty$ and
$$g(x)=\sum^{\infty}_{j=0}c_{j}H_{j}(x),  \quad  c_{j}=\frac{1}{j!}E[g(\xi_0)H_{j}(\xi_0)].$$
The Hermite rank of $ g$ is defined by
$$k=\min\{j:c_j\neq0\},$$
Clearly, $k\geq1$ since $E[g(\xi_0)]=0$.

Let $g$ be a function of Hermite rank $k$ and let  $(\xi_n)_{n\in\mathbb{N}}$ be a stationary centered Gaussian sequence with $E(\xi^2_{n})=1$ which exhibits long range dependence in the sense that the correlation function satisfies
 \begin{equation}\label{sec1-eq1.1}
 r(n):=E(\xi_{0}\xi_{n})=n^{\frac{2H-2}{k}}L(n),
\end{equation}
where $k\geq1$ is an integer, $H\in(\frac12,1)$ and $L$ is a slowly varying function at infinity. Then,
 the \emph{Non Central Limit Theorem} implies that the stochastic processes
$$\frac{1}{n^H}\sum^{[nt]}_{j=1}g(\xi_j),  $$
converges, as $n\rightarrow\infty$, in the sense of finite dimensional distributions to the process
\begin{equation}\label{sec1-eq1.1}
Z_H^k(t)=c(H,k)\int_{\mathbb{R}^k}\int^t_0\left(\prod^k_{j=1}(s-y_j)_{+}^
{-(\frac12+\frac{1-H}{k})}\right)dsdB(y_1)...dB(y_k),
\end{equation}
 where $x_+=\max\{x,0\}$ and the above integral is a Wiener-It\^{o} multiple
integral with respect to the standard Brownian motion
$(B(y))_{y\in\mathbb{R}}$ excluding the diagonals $\{y_i=y_j\},
i\neq j$, $c(H,k)$ is a positive normalization constant depending
only on $H$ and $k$  such that $E(Z_H^k(1))^2=1$. The process
$(Z_H^k(t))_{t\geq 0}$ is called as {\it the Hermite process} of order $k$, it is
$H$ self-similar   and   has stationary increments.
The class of Hermite processes includes fractional Brownian motion ($k=1$)
which is the only Gaussian process in this class.  Their practical aspects are striking:
they provide a wide class of processes from which to model long memory, selfsimilarity,
and H\"{o}lder-regularity, allowing significant deviation from fractional Brownian motion and other
Gaussian processes. Since they are non-Gaussian and self-similar with stationary increments,
the Hermite processes can also be an input in models where self-similarity
is observed in empirical data which appears to be non-Gaussian.
For $k\geq2$,  the process \eqref{sec1-eq1.1} is not Gaussian. When
$k=2$, the process  \eqref{sec1-eq1.1} is known as the Rosenblatt
process (see Taqqu \cite{Ta4}).
More works for the Hermite process and Rosenblatt process can be found in  Albin \cite{Albin}, Leonenko and Ahn \cite{Leonenko},  Abry and Pipiras \cite{Abry},
Maejima and Tudor \cite{MT}, Tudor \cite{Tu5},  Chronopoulou {\it et al}. \cite{Tu2}, Tudor and Viens
\cite{Tu6}, Torres and  Tudor \cite{Tu4},  Shieh and Xiao \cite{Sx},  Pipiras and Taqqu \cite{PT}, Maejima
and Tudor \cite{MT1, MT2}, Chen,  Sun and  Yan \cite{Ch}, Garz\'{o}n,  Torres and Tudor \cite{gtt},
 Tudor \cite{Tu8}, Yan, Li and Wu \cite{C18}, Shen, Yin and Zhu \cite{shen}  and the reference therein.

Motivated by all these results, in this paper,  we will prove the optimal approximation theorem of Rosenblatt sheet based on the multiple Wiener integrals
of form
\begin{align*}
\int^t_0\int^s_0\int^t_0\int^s_0&[k_1(y_1,y_2)^{-\frac12\alpha}(u_1,u_2)^{-\frac12\beta}+k_2(y_1\vee y_2)^{\frac12\alpha}(y_1\wedge y_2)^{-\frac12\alpha}|y_1-y_2|^{\alpha-1}\\
&\cdot(u_1\vee u_2)^{\frac12\beta}(u_1\wedge u_2)^{-\frac12\beta}|u_1-u_2|^{\beta-1}]B(dy_1,du_1)B(dy_2,du_2),~t,s>0
\end{align*}
with $k_1,k_2>0$. Recall that  Rosenblatt sheet with parameter $\frac12<\alpha, \beta<1$ admits an integral
representation of the form (see Tudor \cite{Tu8}), for $t\in[0,T],s\in[0,S]$
\begin{equation*}
\begin{split}
Z^{\alpha,\beta}(t,s)&=d_\alpha d_\beta\int^t_0\int^s_0\int^t_0\int^s_0\int^t_{y_1\vee
y_2}\frac{\partial K^{\alpha'}}{\partial x}(x,y_1)
\frac{\partial K^{\alpha'}}{\partial x}(x,y_2)dx\\
&\quad \times\int^s_{u_1\vee u_2}\frac{\partial K^{\beta'}}{\partial
y}(y,u_1)
\frac{\partial K^{\beta'}}{\partial y}(y,u_2)dyB(dy_1,du_1)B(dy_2,du_2),\\
&:=\int^t_0\int^s_0\int^t_0\int^s_0
Q_\alpha(t,y_1,y_2)Q_\beta(s,u_1,u_2)B(dy_1,du_1)B(dy_2,du_2)\\
\end{split}
\end{equation*}
where $B$ is a standard Brownian sheet and $K$ is the deterministic kernel given by
\begin{equation}\label{sec1-eq1.4}
K^H(t,s)=c_Hs^{\frac12-H}\int^t_s(u-s)^{H-\frac32}u^{H-\frac12}du  \quad \textmd{for} \quad t>s,
\end{equation}
with $c_H=\sqrt{\frac{H(2H-1)}{B(2-2H,H-\frac12)}}$, $B(\cdot,\cdot)$ represents the Beta function,
$H'=\frac{H+1}{2}$ and $d_H=\frac{1}{H+1}\sqrt{\frac{H}{2(2H-1)}}$
 is a normalizing
constant. Denote
$$Q_H(t,y_1,y_2)=d_H1_{[0,t]}(y_1)1_{[0,t]}(y_2)\int^t_{y_1\vee y_2}
\frac{\partial K^{H'}}{\partial x}(x,y_1)
\frac{\partial K^{H'}}{\partial x}(x,y_2)dx.$$

In general, for every Borel measurable
function $\zeta\in L^2([0,T]^2\times[0,S]^2)$ the stochastic integral
$$M_{t,s}(\zeta):=\int^t_0\int^s_0\int^t_0\int^s_0\zeta(y_1,y_2,u_1,u_2)B(dy_1,du_1)B(dy_2,du_2),t\in[0,T],s\in[0,S]$$
is well-defined, and the optimal approximation problem is to estimate
\begin{equation}\label{sec1-eq1.5}
\inf_{\zeta\in L^2([0,T]^2\times[0,S]^2)}\sup_{t\in[0,T], s\in[0,S]}E(Z^{\alpha,\beta}(t,s)-M_{t,s}(\zeta))^2.
\end{equation}
Noting that if the above minimum is attained at the function $\zeta^*$, then
$\zeta^*>0$ a.e..  In fact, we have
\begin{align*}
E&[Z^{\alpha,\beta}(t,s)-M_{t,s}(\zeta)]^2=\frac12 t^{2\alpha}s^{2\beta}
+2\int^t_0\int^s_0\int^t_0\int^s_0\zeta^2(y_1,y_2,u_1,u_2)du_2dy_2du_1dy_1\\
&\quad-4\int^t_0\int^s_0\int^t_0\int^s_0 Q^\alpha(t,y_1,y_2)Q^\beta(s,u_1,u_2)\zeta(y_1,y_2,u_1,u_2)du_2dy_2du_1dy_1\\
\end{align*}
for all $t,s\geq0$.

 If $\zeta^*(y_1,y_2,u_1,u_2)\leq0$, then
$$\sup_{t\in[0,T],s\in[0,S]}E[Z^{\alpha,\beta}(t,s)-M_{t,s}(\zeta^*)]^2\geq
\sup_{t\in[0,T],s\in[0,S]}E[Z^{\alpha,\beta}(t,s)-M_{t,s}(|\zeta^*|)]^2.$$
This gives  the contradiction. Hence, we can assume that $k_1,k_2>0$  and study the
optimal approximation problem
\begin{equation}\label{sec1-eq1.6}
\inf_{\zeta\in\mathcal{K}}\sup_{t\in[0,T],s\in[0,S]}E[Z^{\alpha,\beta}(t,s)-M_{t,s}(\zeta)]^2
\end{equation}
where
\begin{align*}
\mathcal{K}=\{\zeta(y_1,y_2,u_1,u_2)=&k_1(y_1,y_2)^{-\frac12\alpha}(u_1,u_2)^{-\frac12\beta}+k_2(y_1\vee y_2)^{\frac12\alpha}(y_1\wedge y_2)^{-\frac12\alpha}|y_1-y_2|^{\alpha-1}\\
&\quad\cdot(u_1\vee u_2)^{\frac12\beta}(u_1\wedge u_2)^{-\frac12\beta}|u_1-u_2|^{\beta-1},k_1,k_2>0\},
\end{align*}
since $Q_H(t,y_1,y_2)\leq C_{H,T}\{(y_1,y_2)^{-\frac12H}+(y_1\vee y_2)^{\frac12H}(y_1\wedge y_2)^{-\frac12H}|y_1-y_2|^{H-1}\}.$
For $k\in \mathcal{K},$ denote
$$f(t,s,k_1,k_2):=2E[Z^{\alpha,\beta}(t,s)-M_{t,s}(\zeta)]^2, t,s \geq 0.$$
 The similar approximation for the fractional Brownian motion and Rosenblatt process are first considered by Banna and Mishura \cite{BM2008}, Mishura and Banna \cite{MB2009} and Yan, Li and Wu \cite{C18}, respectively.

The rest of this paper is organized as follows.  Section 2 give the representation of
the function $f(t,s,k_1,k_2)=2E(Z^{\alpha,\beta}(t,s)-M_{t,s}(\zeta))^2$ for $\zeta\in\mathcal{K}$.
In Section 3, we consider the function $\sup_{t\in[0,T],s\in[0,S]} f(t,s,k_1,k_2)$ in the compact rectangle interval $[0,T]\times[0,S]$. In  Section 4 and Section 5, we consider the optimal approximation in the two
case $\Delta\leq0$ and $\Delta>0$, respectively. Two special cases be considered in Section 6.
\section{The representation of $f(t,s,k_1,k_2)$}\label{sec2}
In this section, we will  give the representation of $f(t,s,k_1,k_2)=2E[Z^{\alpha,\beta}(t,s)-M_{t,s}(\zeta)]^2$
for $\zeta\in\mathcal{K}$.

\begin{theorem}\label{th2.1}
Let
$$
a(k_2)=1+\frac{4k_2^2}{\alpha\beta}B(1-\alpha,2\alpha-1)B(1-\beta,2\beta-1)-\frac{8k_2}{\alpha\beta}C_2(\alpha)C_2(\beta),
$$
$$
b(k_2)=C_1(\alpha)C_1(\beta)-4k_2B(1-\alpha,\alpha)B(1-\beta,\beta),
$$
where $C_1(\alpha)=d_\alpha c^2_{\alpha'}B^2(1-\alpha,\frac12\alpha)$, and
$$C_2(\alpha)=d_\alpha c^2_{\alpha'}\int_0^1\int_0^{s}r^{-\alpha}(1-s)^{\frac12\alpha-1}(1-r)^{\frac12\alpha-1}(s-r)^{\alpha-1}drds,$$
for all $k_1,k_2\geq0$ and $\frac12<\alpha,\beta<1.$ Then we have
$$f(t,s,k_1,k_2)=a(k_2)t^{2\alpha}s^{2\beta}-8k_1b(k_2)ts+4k_1^2\frac{t^{2-2\alpha}s^{2-2\beta}}{(1-\alpha)^2(1-\beta)^2},~~ t\in[0,T],s\in[0,S].$$

{\bf Remark} 1. As an immediate result we have  $a(k_2)\geq0$ and
\begin{equation}\label{sec2-eq2.1}
b^2(k_2)(1-\alpha)^2(1-\beta)^2\leq\frac14a(k_2)\\
\end{equation}
for all $\alpha,\beta\in(\frac12,1)$ , since $f(t,s,k_1,k_2)\geq0$. Notice that $a(k_2)$ is
also a quadratic equation in $k_2$, we get
$$\frac{4}{\alpha\beta}C_2^2(\alpha)C_2^2(\beta)\leq B(1-\alpha,2\alpha-1)B(1-\beta,2\beta-1)$$
for all $\frac12<\alpha,\beta<1$.

2. Using the constant $C_1, C_2$ we give the main results and at the end of this paper we give the numerical of these constants (see Figure 1,2,3, 4.)
\end{theorem}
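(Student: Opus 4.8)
The plan is to reduce $f$ to the Wiener--It\^o isometry and then to explicit Beta integrals coming from the kernel \eqref{sec1-eq1.4}. Starting from the identity for $E[Z^{\alpha,\beta}(t,s)-M_{t,s}(\zeta)]^2$ recorded just above the statement,
\[
f(t,s,k_1,k_2)=t^{2\alpha}s^{2\beta}+4\int_{[0,t]^2\times[0,s]^2}\zeta^2-8\int_{[0,t]^2\times[0,s]^2}Q_\alpha(t,y_1,y_2)Q_\beta(s,u_1,u_2)\,\zeta .
\]
Write $\zeta=k_1\zeta_1+k_2\zeta_2$, with $\zeta_1=(y_1y_2)^{-\frac12\alpha}(u_1u_2)^{-\frac12\beta}$ and $\zeta_2$ the second summand in the definition of $\mathcal K$. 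The key structural observation is that $\zeta_1$, $\zeta_2$, and the product kernel $Q_\alpha Q_\beta$ each factor as a function of $(y_1,y_2)$ times a function of $(u_1,u_2)$; so after expanding $\|\zeta\|^2=k_1^2\|\zeta_1\|^2+2k_1k_2\langle\zeta_1,\zeta_2\rangle+k_2^2\|\zeta_2\|^2$ and $\langle Q_\alpha Q_\beta,\zeta\rangle=k_1\langle Q_\alpha Q_\beta,\zeta_1\rangle+k_2\langle Q_\alpha Q_\beta,\zeta_2\rangle$, each resulting scalar integral over $[0,t]^2\times[0,s]^2$ splits into an ``$\alpha$-integral'' over $[0,t]^2$ times a structurally identical ``$\beta$-integral'' over $[0,s]^2$. (Finiteness of $\|\zeta_2\|^2$, hence $\mathcal K\subset L^2$ so that $M_{t,s}(\zeta)$ makes sense, is where $\alpha,\beta>\frac12$ enters.) It thus suffices to evaluate each $\alpha$-integral, all of which I would compute using symmetry in $y_1,y_2$, Tonelli (all integrands being nonnegative), and elementary power substitutions.

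The three integrals not involving $Q_\alpha$ are straightforward: $\int_{[0,t]^2}(y_1y_2)^{-\alpha}=\bigl(t^{1-\alpha}/(1-\alpha)\bigr)^2$ (giving the term $4k_1^2t^{2-2\alpha}s^{2-2\beta}/[(1-\alpha)^2(1-\beta)^2]$); $\int_{[0,t]^2}(y_1\vee y_2)^{\alpha}(y_1\wedge y_2)^{-\alpha}|y_1-y_2|^{2\alpha-2}=\frac{t^{2\alpha}}{\alpha}B(1-\alpha,2\alpha-1)$ via the substitution $y_2=y_1r$ (giving the $\frac{4k_2^2}{\alpha\beta}B(1-\alpha,2\alpha-1)B(1-\beta,2\beta-1)$ part of $a(k_2)$); and, since $(y_1y_2)^{-\frac12\alpha}(y_1\vee y_2)^{\frac12\alpha}(y_1\wedge y_2)^{-\frac12\alpha}=(y_1\wedge y_2)^{-\alpha}$, the cross integral equals $\int_{[0,t]^2}(y_1\wedge y_2)^{-\alpha}|y_1-y_2|^{\alpha-1}=2tB(1-\alpha,\alpha)$, so the $\zeta_1\zeta_2$ cross term contributes $32k_1k_2\,ts\,B(1-\alpha,\alpha)B(1-\beta,\beta)$, i.e.\ the $k_1k_2$-part of $-8k_1b(k_2)ts$.

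For the two integrals involving $Q_\alpha$ I would first insert the kernel: since $\alpha'=\frac{\alpha+1}{2}$, differentiating \eqref{sec1-eq1.4} in its first argument gives $\frac{\partial K^{\alpha'}}{\partial x}(x,y)=c_{\alpha'}y^{-\frac12\alpha}(x-y)^{\frac12\alpha-1}x^{\frac12\alpha}$, hence
\[
Q_\alpha(t,y_1,y_2)=d_\alpha c_{\alpha'}^2\,(y_1y_2)^{-\frac12\alpha}\int_{y_1\vee y_2}^{t}(x-y_1)^{\frac12\alpha-1}(x-y_2)^{\frac12\alpha-1}x^{\alpha}\,dx .
\]
Pairing with $\zeta_1^{(\alpha)}=(y_1y_2)^{-\frac12\alpha}$, Tonelli puts $x$ outermost with $y_1,y_2\in[0,x]$, and $\int_0^xy^{-\alpha}(x-y)^{\frac12\alpha-1}dy=x^{-\frac12\alpha}B(1-\alpha,\tfrac12\alpha)$ (via $y=xv$) makes all powers of $x$ cancel, so $\int_{[0,t]^2}Q_\alpha\,\zeta_1^{(\alpha)}=d_\alpha c_{\alpha'}^2B^2(1-\alpha,\tfrac12\alpha)\int_0^t dx=C_1(\alpha)t$; this produces the $-8k_1C_1(\alpha)C_1(\beta)ts$ needed to complete $-8k_1b(k_2)ts$. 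Pairing instead with $\zeta_2^{(\alpha)}$ and using again $(y_1y_2)^{-\frac12\alpha}(y_1\vee y_2)^{\frac12\alpha}(y_1\wedge y_2)^{-\frac12\alpha}=(y_1\wedge y_2)^{-\alpha}$, then restricting by symmetry to $y_2<y_1$, applying Tonelli to put $x$ outermost, and rescaling $y_1=x\sigma$, $y_2=x\rho$ ($0<\rho<\sigma<1$), one finds all powers of $x$ combine to $x^{2\alpha-1}$ and the remaining $(\sigma,\rho)$-integral over $\{0<\rho<\sigma<1\}$ is exactly the double integral defining $C_2(\alpha)/(d_\alpha c_{\alpha'}^2)$; so $\int_{[0,t]^2}Q_\alpha\,\zeta_2^{(\alpha)}=2d_\alpha c_{\alpha'}^2\cdot\frac{t^{2\alpha}}{2\alpha}\cdot\frac{C_2(\alpha)}{d_\alpha c_{\alpha'}^2}=\frac{C_2(\alpha)}{\alpha}t^{2\alpha}$, giving the $-\frac{8k_2}{\alpha\beta}C_2(\alpha)C_2(\beta)$ part of $a(k_2)$. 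Collecting the coefficients of $t^{2\alpha}s^{2\beta}$, $ts$ and $t^{2-2\alpha}s^{2-2\beta}$ then yields precisely $a(k_2)$, $-8k_1b(k_2)$ and $4k_1^2/[(1-\alpha)^2(1-\beta)^2]$, the claimed formula. I expect this last integral to be the only genuinely delicate step: one must first put $Q_\alpha$ into a shape in which a single rescaling collapses the triple integral onto the defining double integral of $C_2(\alpha)$, and then verify the exponent bookkeeping; every interchange of integration is legitimate by Tonelli. For the Remark, set $P=t^{\alpha}s^{\beta}$ and $R=t^{1-\alpha}s^{1-\beta}$, so $P^2=t^{2\alpha}s^{2\beta}$, $R^2=t^{2-2\alpha}s^{2-2\beta}$, $PR=ts$, and the ratio $P/R=t^{2\alpha-1}s^{2\beta-1}$ runs over all of $(0,\infty)$ since $2\alpha-1,2\beta-1>0$. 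Then $f=a(k_2)P^2-8k_1b(k_2)PR+\frac{4k_1^2}{(1-\alpha)^2(1-\beta)^2}R^2\ge0$ for all admissible $(P,R)$; letting $P/R\to\infty$ forces $a(k_2)\ge0$, and nonnegativity of this quadratic form forces $(8k_1b(k_2))^2\le4a(k_2)\cdot\frac{4k_1^2}{(1-\alpha)^2(1-\beta)^2}$, which is \eqref{sec2-eq2.1}. Finally, $a(\cdot)$ being a quadratic in $k_2$ with positive leading coefficient whose minimizer $C_2(\alpha)C_2(\beta)/[B(1-\alpha,2\alpha-1)B(1-\beta,2\beta-1)]$ is positive (hence admissible), the inequality $a\ge0$ forces its discriminant in $k_2$ to be nonpositive, i.e.\ $\frac{4}{\alpha\beta}C_2^2(\alpha)C_2^2(\beta)\le B(1-\alpha,2\alpha-1)B(1-\beta,2\beta-1)$.
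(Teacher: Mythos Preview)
Your proposal is correct and follows essentially the same route as the paper: expand $f$ via the Wiener--It\^o isometry, split every four-fold integral into an $\alpha$-factor over $[0,t]^2$ times a $\beta$-factor over $[0,s]^2$, and reduce each factor to a Beta integral by the substitutions $y=xv$ (for the $Q_\alpha$ pairings) or $y_2=y_1 r$ (for the $\zeta$-only terms). Your write-up is in fact more careful than the paper's---you make the factorisation and the use of Tonelli explicit, you record the derivative $\partial K^{\alpha'}/\partial x$ before inserting it, and you spell out why the $(\sigma,\rho)$-rescaling collapses the $Q_\alpha\zeta_2^{(\alpha)}$ integral onto the defining double integral of $C_2(\alpha)$---but the underlying computation is identical. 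One small remark on the Remark: when you say $P/R=t^{2\alpha-1}s^{2\beta-1}$ ``runs over all of $(0,\infty)$'', note that on the stated domain $[0,T]\times[0,S]$ this ratio is bounded above; the argument still goes through because the identity $f=2E[\,\cdot\,]^2\ge0$ is valid for every $t,s>0$ (the formula you derived does not depend on $T,S$), so you may let $t,s\to\infty$ to force $a(k_2)\ge0$ and then apply the discriminant condition.
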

\begin{proof}
It is easy to calculate that
\begin{align*}
\int^t_0\int^s_0\int^t_0\int^s_0 &Q^\alpha(t,y_1,y_2)Q^\beta(s,u_1,u_2)(y_1y_2)^{-\frac12\alpha}(u_1u_2)^{-\frac12\beta}du_2dy_2du_1dy_1\\
&=\int^t_0\int^t_0Q^\alpha(t,y_1,y_2)(y_1,y_2)^{-\frac12\alpha}dy_1dy_2\int^s_0\int^s_0Q^\beta(s,u_1,u_2)(u_1,u_2)^{-\frac12\beta}du_1du_2\\
&=d_\alpha c^2_{\alpha'}\int^t_0\int^t_0\int^t_{y_1\vee y_2}(y_1y_2)^{-\alpha}u^\alpha(u-y_1)^{\frac12\alpha-1}(u-y_2)^{\frac12\alpha-1}dudy_1dy_2\\
&\quad\times d_\beta  c^2_{\beta'}\int^s_0\int^s_0\int^s_{u_1\vee u_2}(u_1u_2)^{-\beta}u^\beta(u-u_1)^{\frac12\beta-1}(u-u_2)^{\frac12\beta-1}dudy_1dy_2\\
&=d_\alpha c^2_{\alpha'}d_\beta c^2_{\beta'}B^2(1-\alpha,\frac12\alpha)B^2(1-\beta,\frac12\beta)ts=C_1(\alpha)C_1(\beta)ts,
\end{align*}
and
\begin{align*}
\int^t_0\int^s_0\int^t_0\int^s_0 &Q^\alpha(t,y_1,y_2)Q^\beta(s,u_1,u_2)(y_1\vee y_2)^{\frac12\alpha}(y_1\wedge y_2)^{-\frac12\alpha}|y_1-y_2|^{\alpha-1}\\
&\qquad\cdot(u_1\vee u_2)^{\frac12\beta}(u_1\wedge u_2)^{-\frac12\beta}|u_1-u_2|^{\beta-1}du_2dy_2du_1dy_1\\
&=d_\alpha c^2_{\alpha'}[\int^t_0\int^u_0\int^u_0(y_1y_2)^{-\frac12\alpha}u^\alpha(u-y_1)^{\frac12\alpha-1}(u-y_2)^{\frac12\alpha-1}\\
&\qquad\cdot(y_1\vee y_2)^{\frac12\alpha}(y_1\wedge y_2)^{-\frac12\alpha}|y_1-y_2|^{\alpha-1}dy_1dy_2du]\\
&\quad\times d_\beta c^2_{\beta'}[\int^s_0\int^u_0\int^u_0(u_1u_2)^{-\frac12\beta}u^\beta(u-u_1)^{\frac12\beta-1}(u-u_2)^{\frac12\beta-1}\\
&\qquad\cdot(u_1\vee u_2)^{\frac12\beta}(u_1\wedge u_2)^{-\frac12\beta}|u_1-u_2|^{\beta-1}du_1du_2du]\\
&=\frac{C_2(\alpha)}{\alpha}t^{2\alpha}\frac{C_2(\beta)}{\beta}t^{2\beta}.
\end{align*}
 Then for all $t\in[0,T],s\in[0,S]$,  we have,
\begin{align*}
\int^t_0\int^s_0\int^t_0\int^s_0 &Q^\alpha(t,y_1,y_2)Q^\beta(s,u_1,u_2)\zeta(y_1,y_2,u_1,u_2)du_2dy_2du_1dy_1\\
&=k_1C_1(\alpha)C_1(\beta)ts+k_2\frac{C_2(\alpha)C_2(\beta)}{\alpha\beta}t^{2\alpha}s^{2\beta}.
\end{align*}
On the other hand,
\begin{align*}
\int^t_0\int^s_0&\int^t_0\int^s_0\zeta^2(y_1,y_2,u_1,u_2)du_2dy_2du_1dy_1\\
&=\int^t_0\int^s_0\int^t_0\int^s_0k_1^2(y_1y_2)^{-\alpha}(u_1u_2)^{-\beta}du_2dy_2du_1dy_2\\
&\quad+\int^t_0\int^s_0\int^t_0\int^s_0k_2^2(y_1\vee y_2)^\alpha(y_1\wedge y_2)^{-\alpha}|y_1-y_2|^{2\alpha-2}\\
&\qquad\qquad\qquad\cdot(u_1\vee u_2)^\beta(y_1\wedge u_2)^{-\beta}|u_1-u_2|^{2\beta-2}du_2dy_2du_1dy_2\end{align*}\begin{align*}
&\quad+\int^t_0\int^s_0\int^t_0\int^s_02k_1k_2(y_1y_2)^{-\frac12\alpha}(y_1\vee y_2)^{\frac12\alpha}(y_1\wedge y_2)^{-\frac12\alpha}|y_1-y_2|^{\alpha-1}\\
&\qquad\qquad\qquad\cdot(u_1u_2)^{-\frac12\beta}(u_1\vee u_2)^{\frac12\beta}(u_1\wedge u_2)^{-\frac12\beta}|u_1-u_2|^{\beta-1}du_2dy_2du_1dy_2\\
&:=I_1+I_2+I_3,
\end{align*}
where
\begin{align*}
I_1&=k_1^2\int^t_0\int^t_0(y_1y_2)^{-\alpha}dy_1dy_2\int^s_0\int^s_0(u_1u_2)^{-\beta}du_1du_2\\
&=\frac{t^{2-2\alpha}s^{2-2\beta}}{(1-\alpha)^2(1-\beta)^2}k_1^2.
\end{align*}
\begin{align*}
I_2&=k_2^2(\int^t_0\int^t_0(y_1\vee y_2)^\alpha(y_1\wedge y_2)^{-\alpha}|y_1-y_2|^{2\alpha-2}dy_1dy_2)\\
&\qquad\qquad\cdot(\int^s_0\int^s_0(u_1\vee u_2)^\beta(y_1\wedge u_2)^{-\beta}|u_1-u_2|^{2\beta-2}du_1du_1)\\
&=k_2^2\left[\int^t_0\int^{y_2}_0y_2^\alpha y_1^{-\alpha}(y_2-y_1)^{2\alpha-2}dy_1dy_2+\int^t_0\int^t_{y_2}y_1^\alpha y_2^{-\alpha}(y_1-y_2)^{2\alpha-2}dy_1dy_2\right]\\
&\qquad\cdot\left[\int^s_0\int^{u_2}_0u_2^\beta u_1^{-\beta}(u_2-u_1)^{2\beta-2}du_1du_2+\int^u_0\int^s_{u_2}u_1^\beta u_2^{-\beta}(u_1-u_2)^{2\beta-2}du_1du_2\right]\\
&=k_2^2(2\int^t_0y_2^{2\alpha-1}dy_2\int^1_0u^{-\alpha}(1-u)^{2\alpha-2}du)\cdot(2\int^s_0u_2^{2\beta-1}du_2\int^1_0u^{-\beta}(1-u)^{2\beta-2}du)\\
&=\frac{k_2^2}{\alpha\beta}B(1-\alpha,2\alpha-1)B(1-\beta,2\beta-1)t^{2\alpha}s^{2\beta}.
\end{align*}
\begin{align*}
I_3&=2k_1k_2(\int^t_0\int^t_0(y_1y_2)^{-\frac12\alpha}(y_1\vee y_2)^{\frac12\alpha}(y_1\wedge y_2)^{-\frac12\alpha}|y_1-y_2|^{\alpha-1}dy_1dy_2)\\
&\qquad\cdot(\int^s_0\int^s_0(u_1u_2)^{-\frac12\beta}(u_1\vee u_2)^{\frac12\beta}(u_1\wedge u_2)^{-\frac12\beta}|u_1-u_2|^{\beta-1}du_1du_2)\\
&=2k_1k_2(\int^t_0\int^{y_2}_0y_1^{-\alpha}(y_2-y_1)^{\alpha-1}dy_1dy_2+\int^t_0\int_{y_2}^ty_2^{-\alpha}(y_1-y_2)^{\alpha-1}dy_1dy_2)\\
&\qquad\cdot(\int^s_0\int^{u_2}_0u_1^{-\beta}(u_2-u_1)^{\beta-1}du_1du_2+\int^s_0\int_{u_2}^su_2^{-\beta}(u_1-u_2)^{\beta-1}du_1du_2)\\
&=2k_1k_2(2\int^t_0dy_1\int^1_0u^{-\alpha}|1-u|^{\alpha-1}du)\cdot(2\int^s_0du_1\int^1_0u^{-\beta}|1-u|^{\beta-1}du)\\
&=8k_1k_2B(1-\alpha,\alpha)B(1-\beta,\beta)ts.
\end{align*}
Hence,
\begin{align*}
&\int^t_0\int^s_0\int^t_0\int^s_0\zeta^2(y_1,y_2,u_1,u_2)du_2dy_2du_1dy_1=\frac{k_1^2}{(1-\alpha)^2(1-\beta)^2}t^{2-2\alpha}s^{2-2\beta}\\
&\quad+\frac{k_2^2}{\alpha\beta}B(1-\alpha,2\alpha-1)B(1-\beta,2\beta-1)t^{2\alpha}s^{2\beta}+8k_1k_2B(1-\alpha,\alpha)B(1-\beta,\beta)ts,
\end{align*}
for all $\zeta\in\mathcal{K}$. It follows that
\begin{align*}
f(t,s,k_1,k_2)&=2E(Z^{\alpha,\beta}(t,s)-M_{t,s}(\zeta))^2\\
&=t^{2\alpha}s^{2\beta}+4\int^t_0\int^s_0\int^t_0\int^s_0\zeta^2(y_1,y_2,u_1,u_2)du_2dy_2du_1dy_1\\
&-8\int^t_0\int^s_0\int^t_0\int^s_0Q^\alpha(t,y_1,y_2)Q^\beta(s,u_1,u_2)\zeta(y_1,y_2,u_1,u_2)du_2dy_2du_1dy_1\\
&=a(k_2)t^{2\alpha}s^{2\beta}-8k_1b(k_2)ts+4k_1^2\frac{t^{2-2\alpha}s^{2-2\beta}}{(1-\alpha)^2(1-\beta)^2}.
\end{align*}
This completes the proof.\\
\end{proof}

\section{The maximum value of $f(t,s,k_1,k_2)$ }\label{sec3}
In this section, in order to obtain the optimal approximation with  $k_1,k_2>0$. We need to find the maximum value point
$P_0(t_0,s_0)$ in the open rectangle interval $(0,T)\times(0,S)$,  and  the maximum value point at the boundary of rectangle interval $[0,T]\times[0,S]$.
Thus, we can get the $\sup\limits_{t\in[0,T],s\in[0,S]}f(t,s,k_1,k_2).$

\begin{lemma}\label{lem 3.1}
The function $f(t,s,k_1.k_2)$ at the open rectangle interval $(0,T)\times(0,S)$ can't get the maximum value point
$P_0(t_0,s_0)$, $t_0\in(0,T),s_0\in(0,S)$.
\end{lemma}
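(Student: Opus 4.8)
The plan is to combine the explicit formula of Theorem~\ref{th2.1} with the positivity $f\ge 0$ and to show that $f$ has no interior local maximum on $(0,T)\times(0,S)$; since $f$ is continuous on the compact rectangle $[0,T]\times[0,S]$, its maximum is then forced onto the boundary, which is exactly the assertion. Throughout write $A=a(k_2)\ge 0$, $B=8k_1b(k_2)$ and $C=\frac{4k_1^{2}}{(1-\alpha)^{2}(1-\beta)^{2}}>0$, so that
$$f(t,s,k_1,k_2)=A\,t^{2\alpha}s^{2\beta}-B\,ts+C\,t^{2-2\alpha}s^{2-2\beta},\qquad t,s>0,$$
and recall from \eqref{sec2-eq2.1} that $b^{2}(k_2)(1-\alpha)^{2}(1-\beta)^{2}\le\tfrac14 a(k_2)$, i.e. $B^{2}\le 4AC$.

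First I would locate the interior critical points. Computing $\partial f/\partial t$ and $\partial f/\partial s$, setting them equal to zero at a point $(t,s)\in(0,T)\times(0,S)$, multiplying the two resulting equations by $t$ and by $s$ respectively, and abbreviating $X=A\,t^{2\alpha}s^{2\beta}$, $Y=B\,ts$, $Z=C\,t^{2-2\alpha}s^{2-2\beta}$, one is led to the pair
$$2\alpha X-Y+(2-2\alpha)Z=0,\qquad 2\beta X-Y+(2-2\beta)Z=0 .$$
Subtracting the second equation from the first gives $(\alpha-\beta)(X-Z)=0$, which is the decisive simplification.

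If $\alpha\neq\beta$, then $X=Z$, and either equation then forces $Y=2X$; hence at such a critical point $f=X-Y+Z=0$. Since $f=2E[Z^{\alpha,\beta}(t,s)-M_{t,s}(\zeta)]^{2}\ge 0$, the value $0$ is the infimum of $f$, so an interior critical point must be a global \emph{minimum} of $f$ and can never be a maximum; in fact $X=Z$, $Y=2X$ amounts to $B^{2}=4AC$, the borderline case of \eqref{sec2-eq2.1}, and a brief computation shows that the Hessian there is positive semi-definite with vanishing determinant. If $\alpha=\beta$, the two equations coincide and $f$ depends on $(t,s)$ only through $w:=ts$, namely $f=\phi(w)$ with $\phi(w)=A\,w^{2\alpha}-B\,w+C\,w^{2-2\alpha}$, so an interior critical point forces $\phi'(w)=0$. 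When $A=0$, \eqref{sec2-eq2.1} forces $B=0$ and $\phi'(w)=(2-2\alpha)C\,w^{1-2\alpha}>0$; when $A>0$, since $\tfrac12<\alpha<1$ the arithmetic--geometric mean inequality gives, for every $w>0$,
$$2\alpha A\,w^{2\alpha-1}+(2-2\alpha)C\,w^{1-2\alpha}\ \ge\ 2\sqrt{2\alpha A\cdot(2-2\alpha)C}\ =\ 4\sqrt{\alpha(1-\alpha)AC},$$
the two powers of $w$ cancelling, so that $\phi'(w)\ge 4\sqrt{\alpha(1-\alpha)AC}-B$. It then remains to verify the pointwise bound $B<4\sqrt{\alpha(1-\alpha)AC}$ --- equivalently, when $b(k_2)\ge 0$, the inequality $b^{2}(k_2)(1-\alpha)^{3}<\alpha\,a(k_2)$ --- which one reads off from the explicit values of the constants $C_1$ and $C_2$; granting it, $\phi'>0$ on $(0,\infty)$, so $f$ has no interior critical point.

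In either case $f$ has no interior local maximum, hence the maximum of $f$ over $[0,T]\times[0,S]$ is attained on the boundary and $P_0(t_0,s_0)$ cannot lie in $(0,T)\times(0,S)$. The differentiation and the elimination producing $(\alpha-\beta)(X-Z)=0$ are routine; the genuinely delicate point is the degenerate case $\alpha=\beta$, where the critical set of $f$ is an entire hyperbola $ts=\mathrm{const}$ and the Hessian of $f$ at a critical point of $\phi$ is degenerate (its kernel being tangent to that hyperbola) and need not be positive semi-definite, so a naive second-derivative test is inconclusive and one must instead pass to the one-variable function $\phi$ and invoke the sharp lower bound for $\phi'$ together with the inequality between the constants $a(k_2)$ and $b(k_2)$.
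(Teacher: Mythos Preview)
Your treatment of the case $\alpha\neq\beta$ is essentially the paper's argument, stated more cleanly: both you and the paper reduce the two first-order conditions to a single relation that forces $f=0$ at any interior critical point, and since $f\ge 0$ and $f\not\equiv 0$ this point is a global minimum, not a maximum. (The paper phrases the contradiction via $f(\varepsilon,\varepsilon,0,0)>0$, which is awkward because it changes $(k_1,k_2)$; your formulation is the right one.)

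The difficulty is the case $\alpha=\beta$, which the paper glosses over and which you correctly isolate. However, your proposed resolution has a real gap. The inequality you need, $b^{2}(k_2)(1-\alpha)^{3}<\alpha\,a(k_2)$, is \emph{precisely} the condition $\Delta<0$ of Section~3 specialized to $\beta=\alpha$: indeed $\Delta=16T^{2}\bigl(b^{2}(k_2)-\beta a(k_2)/((1-\alpha)^{2}(1-\beta))\bigr)$ becomes $16T^{2}\bigl(b^{2}(k_2)-\alpha a(k_2)/(1-\alpha)^{3}\bigr)$ when $\beta=\alpha$. Since Section~5 of the paper is entirely devoted to the regime $\Delta>0$, this inequality cannot be ``read off from the explicit values of $C_1$ and $C_2$''; it genuinely fails for a range of $k_2$. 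Your AM--GM bound therefore does not yield $\phi'>0$ in general, and $\phi$ can have a local maximum.

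Worse, when $\alpha=\beta$ and $\phi$ attains its maximum on $[0,TS]$ at some $w_0\in(0,TS)$, the function $f(t,s)=\phi(ts)$ attains its maximum on the entire arc $ts=w_0$, which contains interior points of the rectangle; so the Lemma as literally stated is delicate in this degenerate case. What remains true---and is all that Sections~3--5 actually use---is that this arc always meets the boundary $\{t=T\}\cup\{s=S\}$, so $\sup_{[0,T]\times[0,S]}f$ is attained on the boundary. If you want a clean argument covering $\alpha=\beta$, prove that weaker statement directly: since $f(t,s)=\phi(ts)$ and the map $(t,s)\mapsto ts$ sends both the closed rectangle and its boundary $\{t=T\}\cup\{s=S\}\cup\{t=0\}\cup\{s=0\}$ onto $[0,TS]$, the suprema over the rectangle and over its boundary coincide.
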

\begin{proof}
If the function $f(t,s,k_1,k_2)$ have maximum value point $P_0(t_0,s_0)$, $t_0\in(0,T),s_0\in(0,S)$, then $P_0(t_0,s_0)$
must be the stagnation point of the function
$$(t,s)\mapsto f(t,s,k_1,k_2).$$
That is,
$$\frac{\partial f(t,s,k_1,k_2)}{\partial t}|_{P_0(t_0,s_0)}=0 , ~~~ \frac{\partial f(t,s,k_1,k_2)}{\partial s}|_{P_0(t_0,s_0)}=0,$$
Solving equations set:
\begin{align*}
\frac{\partial f(t,s,k_1,k_2)}{\partial t}&=2t^{1-2\alpha}s^{2-2\beta}(\alpha a(k_2)t^{4\alpha-2}s^{4\beta-2}\\
&\quad-4k_1b(k_2)t^{2\alpha-1}s^{2\beta-1}+\frac{4k_1^2}{(1-\alpha)(1-\beta)^2})=0, \\
\frac{\partial f(t,s,k_1,k_2)}{\partial s}&=2t^{2-2\alpha}s^{1-2\beta}(\beta a(k_2)t^{4\alpha-2}s^{4\beta-2}\\
&\quad-4k_1b(k_2)t^{2\alpha-1}s^{2\beta-1}+\frac{4k_1^2}{(1-\alpha)^2(1-\beta)})=0.
\end{align*}
Let $x=t^{2\alpha-1}s^{2\beta-1},$
 elementary calculation can obtain
$$x=\frac{k_1}{b(k_2)(1-\alpha)^2(1-\beta)^2},$$
and
$$
\sqrt{a(k_2)}=2b(k_2)(1-\alpha)(1-\beta).$$
Hence, we have
\begin{align*}
f(t_0,s_0,k_1,k_2)=t_0s_0[a(k_2)\frac{k_1}{b(k_2)(1-\alpha)^2(1-\beta)^2}-4k_1b(k_2)]=0.
\end{align*}
But $f(\varepsilon,\varepsilon,0,0)=\varepsilon^2>0$, for any $\varepsilon>0$.
That is contradiction. Therefore, the function $f(t,s,k_1,k_2)$ at the open rectangle interval$(0,T)\times(0,S)$ can't get the maximum value point
$P_0(t_0,s_0)$, $t_0\in(0,T),s_0\in(0,S)$.
\end{proof}

At the boundary $0\times[0,S]$, and $[0,T]\times0$, we have $f(t,s,k_1,k_2)=0$. So, we  need to consider the case of extremum value point on the boundary $T\times[0,S]$ and $[0,T]\times S$, with $k_1, k_2>0$. We only think about the case of the boundary $T\times[0,S]$. In the same way, we can get case of the boundary $[0,T]\times S$.

It follows from  Theorem 2.1, we know that
\begin{equation}\label{sec3-eq3.1}
f(T,s,k_1,k_2)=a(k_2)T^{2\alpha}s^{2\beta}-8k_1b(k_2)Ts+4k_1^2\frac{T^{2-2\alpha}s^{2-2\beta}}{(1-\alpha)^2(1-\beta)^2},
\end{equation}
for all $s\in[0,S]$. Differentiating \eqref{sec3-eq3.1} with respect to $s$ leads to
\begin{equation}\label{sec3-eq3.2}
f_s(T,s,k_1,k_2)=2\beta a(k_2)T^{2\alpha}s^{2\beta-1}-8k_1b(k_2)T+\frac{8k_1^2}{(1-\alpha)^2(1-\beta)}T^{2-2\alpha}s^{1-2\beta}.
\end{equation}

Let $f_s(T,s,k_1,k_2)=0$ and $x=k_1s^{1-2\beta}$, which implies that
\begin{equation}\label{sec3-eq3.3}
F(x):=\beta a(k_2)T^{2\alpha}-4b(k_2)Tx+\frac{4T^{2-2\alpha}x^2}{(1-\alpha)^2(1-\beta)}=0
\end{equation}
and the discriminant $\Delta$ of the quadratic function $F(x)$ is
$$\Delta=16T^2((b(k_2))^2-\frac{\beta a(k_2)}{(1-\alpha)^2(1-\beta)}).$$
If $\Delta\leq 0$, then
\begin{equation}\label{sec3-eq3.4}
\begin{split}
\sup_{s\in[0,S]}f(T,s,k_1,k_2)&=f(T,S,k_1,k_2)\\
&=T^{2\alpha}S^{2\beta}a(k_2)-8k_1b(k_2)TS+4k_1^2\frac{T^{2-2\alpha}S^{2-2\beta}}{(1-\alpha)^2(1-\beta)^2}.
\end{split}
\end{equation}
Using the same method, on the boundary $[0,T]\times S$, we have
$$
\sup_{t\in[0,T]}f(t,S,k_1,k_2)=f(T,S,k_1,k_2).
$$
If $\Delta>0$, then the equation \eqref{sec3-eq3.3} has two real roots as follows
$$x_1=\frac{(1-\alpha)^2(1-\beta)}{2T^{1-2\alpha}}(b(k_2)+\sqrt{b^2(k_2)-\frac{\beta a(k_2)}{(1-\alpha)^2(1-\beta)}}),$$
and
$$x_2=\frac{(1-\alpha)^2(1-\beta)}{2T^{1-2\alpha}}(b(k_2)-\sqrt{b^2(k_2)-\frac{\beta a(k_2)}{(1-\alpha)^2(1-\beta)}}),$$
which says $s_1=k_1^{\frac{1}{2\alpha-1}}x_1^{-\frac{1}{2\alpha-1}}$, and $s_2=k_1^{\frac{1}{2\alpha-1}}x_2^{-\frac{1}{2\alpha-1}}$  are two stagnation points of the function $s\mapsto f(T,s,k_1,k_2)$. Hence,
  $s_1 , s_2 $ are the points of local maximum and minimum, respectively,  since the monotonicity of the function $s\mapsto f(T,s,k_1,k_2)$. This implies that
\begin{equation}\label{sec3-eq3.5}
\begin{split}
&\sup_{s\in[0,S]}f(T,s,k_1,k_2)=f(T,S,k_1,k_2), \qquad \textmd{if} \quad s_1\geq S,\\
&\sup_{s\in[0,S]}f(T,s,k_1,k_2)=\max\{f(T,s_1,k_1,k_2),f(T,S,k_1,k_2)\}, \qquad \textmd{if} \quad s_1<S.
\end{split}
\end{equation}

Using the same method, on the boundary $[0,T]\times S$, we can find a $t_1$, such that
\begin{equation}\label{sec3-eq3.6}
\begin{split}
&\sup_{t\in[0,T]}f(t,S,k_1,k_2)=f(T,S,k_1,k_2),\qquad \textmd{if} \quad t_1\geq T,\\
&\sup_{t\in[0,T]}f(t,S,k_1,k_2)=\max\{f(t_1,S,k_1,k_2),f(T,S,k_1,k_2)\}, \qquad \textmd{if} \quad t_1<T.
\end{split}
\end{equation}

\section{The optimal approximation, case $\Delta\leq0$ }\label{sec4}

\begin{theorem}\label{th 4.1}
If $\Delta\leq0$, then we have
\begin{align*}
\inf_{\zeta\in\mathcal{K}}\sup_{T\times[0,S]}f(t,s,k_1,k_2)=T^{2\alpha}S^{2\beta}a(k^{*}_2)-8k^{*}_1b(k^{*}_2)TS+4(k^{*}_1)^2\frac{T^{2-2\alpha}S^{2-2\beta}}{(1-\alpha)^2(1-\beta)^2},
\end{align*}
where
\begin{align*}
\mathcal{K}=\{\zeta(y_1,y_2,u_1,u_2)=&k^*_1(y_1,y_2)^{-\frac12\alpha}(u_1,u_2)^{-\frac12\beta}+k^*_2(y_1\vee y_2)^{\frac12\alpha}(y_1\wedge y_2)^{-\frac12\alpha}|y_1-y_2|^{\alpha-1}\\
&\quad\cdot(u_1\vee u_2)^{\frac12\beta}(u_1\wedge u_2)^{-\frac12\beta}|u_1-u_2|^{\beta-1},k_1,k_2>0\}
\end{align*}
and $(k_1^*,k_2^*)$ is the stagnation point of the function
$$(k_1,k_2)\mapsto f(T,S,k_1,k_2),$$
here
\begin{equation}\label{sec4-eq4.1}
\begin{split}
k_1^*=&\frac{4B(1-\alpha,\alpha)B(1-\beta,\beta)C_2(\alpha)C_2(\beta)-B(1-\alpha,2\alpha-1)B(1-\beta,2\beta-1)C_1(\alpha)C_1(\beta)}{8\alpha \beta(1-\alpha)^2(1-\beta)^2B^2(1-\alpha,\alpha)B^2(1-\beta,\beta)-B(1-\alpha,2\alpha-1)B(1-\beta,2\beta-1)}\\
&\quad\times(1-\alpha)^2(1-\beta)^2T^{2\alpha-1}S^{2\beta-1},
\end{split}
\end{equation}

\begin{equation}\label{sec4-eq4.2}
k_2^*=\frac{C_2(\alpha)C_2(\beta)-4\alpha\beta(1-\alpha)^2(1-\beta)^2B(1-\alpha,\alpha)B(1-\beta,\beta)C_1(\alpha)C_1(\beta)}{B(1-\alpha,2\alpha-1)B(1-\beta,2\beta-1)-8\alpha\beta(1-\alpha)^2(1-\beta)^2B^2(1-\alpha,\alpha)B^2(1-\beta,\beta)}.
\end{equation}
\end{theorem}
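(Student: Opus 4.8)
The plan is to collapse the supremum to a single function value and then minimize an explicit quadratic in $(k_1,k_2)$. Since we are in the case $\Delta\le0$, equation \eqref{sec3-eq3.4} already gives $\sup_{s\in[0,S]}f(T,s,k_1,k_2)=f(T,S,k_1,k_2)$; combined with Lemma \ref{lem 3.1} (no interior maximum), the symmetric statement on the edge $[0,T]\times\{S\}$, and the vanishing of $f$ on $\{0\}\times[0,S]$ and on $[0,T]\times\{0\}$, this shows $\sup_{t\in[0,T],\,s\in[0,S]}f(t,s,k_1,k_2)=f(T,S,k_1,k_2)$. Hence the left-hand side of the theorem equals $\inf_{k_1,k_2>0}g(k_1,k_2)$, where $g(k_1,k_2):=f(T,S,k_1,k_2)$.

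Next I would minimize $g$ over the quadrant $\{k_1,k_2>0\}$ in two stages. By Theorem \ref{th2.1}, for fixed $k_2$ the function $g(\,\cdot\,,k_2)$ is a quadratic in $k_1$ with positive leading coefficient $4T^{2-2\alpha}S^{2-2\beta}/((1-\alpha)^2(1-\beta)^2)$, so its vertex is at
$$\bar k_1(k_2)=b(k_2)(1-\alpha)^2(1-\beta)^2T^{2\alpha-1}S^{2\beta-1},$$
with minimal value $\phi(k_2):=g(\bar k_1(k_2),k_2)=\big(a(k_2)-4(1-\alpha)^2(1-\beta)^2b(k_2)^2\big)T^{2\alpha}S^{2\beta}$. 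By \eqref{sec2-eq2.1} this $\phi$ is nonnegative; using the explicit forms of $a$ and $b$, it is a quadratic in $k_2$ whose $k_2^2$-coefficient is a positive multiple of $B(1-\alpha,2\alpha-1)B(1-\beta,2\beta-1)-16\alpha\beta(1-\alpha)^2(1-\beta)^2B^2(1-\alpha,\alpha)B^2(1-\beta,\beta)$, which is strictly positive under the standing hypothesis $\Delta\le0$ (one reads this off by comparing $k_2^2$-coefficients in \eqref{sec2-eq2.1} and in the inequality $b^2(k_2)(1-\alpha)^2(1-\beta)\le\beta a(k_2)$ that $\Delta\le0$ encodes). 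Thus $\phi$ is strictly convex and attains its minimum at the unique root $k_2^*$ of $\phi'(k_2)=0$; since $a'$ is linear and $b'$ constant, this is a single linear equation, whose solution is \eqref{sec4-eq4.2}, and then $k_1^*=\bar k_1(k_2^*)=b(k_2^*)(1-\alpha)^2(1-\beta)^2T^{2\alpha-1}S^{2\beta-1}$ simplifies (after rewriting $b(k_2^*)$) to \eqref{sec4-eq4.1}.

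Finally one checks that $(k_1^*,k_2^*)$ actually lies in the admissible region $\{k_1>0,k_2>0\}$, i.e.\ that $k_1^*>0$ and $k_2^*>0$; granting this, the two-stage construction gives $\inf_{k_1,k_2>0}g(k_1,k_2)=g(k_1^*,k_2^*)=T^{2\alpha}S^{2\beta}a(k_2^*)-8k_1^*b(k_2^*)TS+4(k_1^*)^2T^{2-2\alpha}S^{2-2\beta}/((1-\alpha)^2(1-\beta)^2)$, which is the asserted value. Equivalently, $g$ is a quadratic polynomial in $(k_1,k_2)$ whose (constant) Hessian has positive $(1,1)$-entry $8T^{2-2\alpha}S^{2-2\beta}/((1-\alpha)^2(1-\beta)^2)$ and determinant proportional to the same Beta-function combination, hence is positive definite, so $g$ is convex and $(k_1^*,k_2^*)$ is its global minimizer.

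The routine part is the algebra of solving the $2\times2$ stationarity system and simplifying $b(k_2^*)$. The substantive part — and the main obstacle — is the positivity bookkeeping: proving the $k_2^2$-coefficient of $\phi$ (equivalently the Hessian determinant of $g$) is strictly positive, and that $k_1^*,k_2^*>0$ so that the minimizer lies in $\mathcal{K}$ rather than on its boundary. Both reduce to genuine inequalities among Beta functions, and the key observation is that precisely these inequalities are already at hand: the convexity inequality is the $k_2^2$-coefficient comparison in \eqref{sec2-eq2.1} (sharpened by $\Delta\le0$), and the positivity of $k_1^*$ and $k_2^*$ follows from \eqref{sec2-eq2.1} together with the inequality $\tfrac{4}{\alpha\beta}C_2^2(\alpha)C_2^2(\beta)\le B(1-\alpha,2\alpha-1)B(1-\beta,2\beta-1)$ recorded in Remark 1 after Theorem \ref{th2.1}.
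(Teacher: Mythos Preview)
Your proposal is correct and follows essentially the same route as the paper: collapse the supremum to $f(T,S,k_1,k_2)$ via \eqref{sec3-eq3.4}, then minimize this quadratic in $(k_1,k_2)$ by locating the stationary point and verifying convexity. The paper does the latter directly via the Hessian, exactly as in your final paragraph, while your two-stage minimization (first in $k_1$, then in $k_2$) is an equivalent presentation of the same computation.

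Two small remarks on the comparison. First, the paper justifies strict positivity of the Hessian determinant by invoking the Beta-function inequality $B^2(1-\alpha,\alpha)<B(1-\alpha,2\alpha-1)/(1-\alpha)$ together with $\alpha(1-\alpha)<\tfrac14$ for $\alpha\in(\tfrac12,1)$; your argument via comparing $k_2^2$-coefficients in \eqref{sec2-eq2.1} only yields the non-strict inequality, so you would still need an additional observation to rule out equality. Second, you explicitly flag the need to check $k_1^*,k_2^*>0$ so that the minimizer lies in $\mathcal{K}$; the paper does not address this point at all, so your treatment is in that respect more careful, though your sketch of how this positivity follows from \eqref{sec2-eq2.1} and Remark~1 would benefit from being spelled out.
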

\begin{proof}
It follows from equation \eqref{sec3-eq3.4}, when $\Delta\leq0$,
$$
\sup_{s\in[0,S]}f(T,s,k_1,k_2)=T^{2\alpha}S^{2\beta}a(k_2)-8k_1b(k_2)TS+4(k_1)^2\frac{T^{2-2\alpha}S^{2-2\beta}}{(1-\alpha)^2(1-\beta)^2}\\
$$
for all $k_1,k_2\geq0$. Let now $(k_1^*,k_2^*)$ be the stagnation point of the function
$$(k_1,k_2)\mapsto f(T,S,k_1,k_2).$$
An elementary calculation can obtain $k_1^*,k_2^*$ which can be denoted by \eqref{sec4-eq4.1} and \eqref{sec4-eq4.2}, and the Hessian matrix $\mathbf{H}$ as follows

\begin{displaymath}
\mathbf{H} =
\left( \begin{array}{ccc}
\frac{\partial^2f(T,S,k_1k_2)}{\partial k_1^2} & \frac{\partial^2f(T,S,k_1k_2)}{\partial k_1\partial k_2}  \\
\frac{\partial^2f(T,S,k_1k_2)}{\partial k_2\partial k_1}  & \frac{\partial^2f(T,S,k_1k_2)}{\partial k_2^2}  \\
\end{array} \right)
\end{displaymath}
and $$\frac{\partial^2f(T,S,k_1k_2)}{\partial k_1^2}=\frac{8T^{2-2\alpha}S^{2-2\beta}}{(1-\alpha)^2(1-\beta)^2}>0,$$
\begin{align*}
\frac{\partial^2f(T,S,k_1k_2)}{\partial k_1\partial k_2}&=\frac{\partial^2f(T,S,k_1k_2)}{\partial k_2\partial k_1}\\
&=32TSB(1-\alpha,\alpha)B(1-\beta,\beta),
\end{align*}
$$\frac{\partial^2f(T,S,k_1k_2)}{\partial k_2^2}=\frac{8T^{2\alpha}S^{2\beta}}{\alpha\beta}B(1-\alpha,2\alpha-1)B(1-\beta,2\beta-1).$$
So,
\begin{align*}
|\mathbf{H}|=64T^2S^2(\frac{B(1-\alpha,2\alpha-1)B(1-\beta,2\beta-1)}{\alpha(1-\alpha)^2\beta(1-\beta)^2}-16B^2(1-\alpha,\alpha)B^2(1-\beta,\beta))>0
\end{align*}
for all $\frac12<\alpha,\beta<1$, since $B^2(1-\alpha,\alpha)<\frac{B(1-\alpha,2\alpha-1)}{1-\alpha}$. This means that the minimal value of
$(k_1,k_2)\mapsto f(T,S,k_1,k_2)$ is achieved at point $(k_1^*,k_2^*)$. This completes the proof.
\end{proof}

\section{The optimal approximation, case $\Delta>0$ }\label{sec5}

\begin{lemma}\label{lem 5.1}
If $\Delta>0$, then we have
$$s_1(k_1^*,k_2^*)<S,$$
and
$$s_2(k_1^*,k_2^*)<S,$$
where $(k_1^*,k_2^*)$ is the stagnation point of the function
$(k_1,k_2)\mapsto \sup_{s\in[0,S]}f(T,s,k_1,k_2)$.\\
\end{lemma}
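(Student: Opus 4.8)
The plan is to argue by contradiction in two steps, relying on the shape of $s\mapsto f(T,s,k_1,k_2)$ obtained in Section~\ref{sec3}: when $\Delta>0$ it increases on $(0,s_1)$, decreases on $(s_1,s_2)$ and increases on $(s_2,\infty)$, and \eqref{sec3-eq3.5} records the ensuing formula for $\sup_{s\in[0,S]}f$. Write $a=a(k_2^*)$, $b=b(k_2^*)$ and $D=\sqrt{b^2-\tfrac{\beta a}{(1-\alpha)^2(1-\beta)}}>0$ (real since $\Delta>0$). Since the stagnation points $s_1,s_2$ are finite and positive, the roots $x_i=k_1^*s_i^{1-2\beta}$ of the quadratic $F$ in \eqref{sec3-eq3.3} are positive; from $x_1+x_2=b(1-\alpha)^2(1-\beta)T^{2\alpha-1}$ and $x_1x_2=\tfrac14\beta a(1-\alpha)^2(1-\beta)T^{4\alpha-2}$ this forces $b>0$ and $a>0$.

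First I would prove $s_1(k_1^*,k_2^*)<S$. Assume, to the contrary, $s_1(k_1^*,k_2^*)\ge S$. By \eqref{sec3-eq3.5} the map $(k_1,k_2)\mapsto\sup_{s\in[0,S]}f(T,s,k_1,k_2)$ then coincides with $(k_1,k_2)\mapsto f(T,S,k_1,k_2)$ in a neighbourhood of $(k_1^*,k_2^*)$, so $\partial_{k_1}f(T,S,k_1^*,k_2^*)=0$; solving this relation (as in the proof of Theorem~\ref{th 4.1}) gives $k_1^*=b(1-\alpha)^2(1-\beta)^2T^{2\alpha-1}S^{2\beta-1}$. Inserting this and $x_1(k_2^*)=\tfrac{(1-\alpha)^2(1-\beta)}{2T^{1-2\alpha}}(b+D)$ into $s_1=(k_1^*/x_1(k_2^*))^{1/(2\beta-1)}$, the powers of $T$ and a factor $b$ cancel, leaving
\begin{equation*}
s_1(k_1^*,k_2^*)=S\left(\frac{2b(1-\beta)}{b+D}\right)^{\!\frac{1}{2\beta-1}}.
\end{equation*}
Since $b>0$ and $D>0$ we have $2b(1-\beta)<b<b+D$, so the bracket is $<1$; as $\tfrac{1}{2\beta-1}>0$ this yields $s_1(k_1^*,k_2^*)<S$, contradicting the assumption.

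Next I would prove $s_2(k_1^*,k_2^*)<S$. Assume $s_2(k_1^*,k_2^*)\ge S$; together with the previous step this places us in the regime $s_1<S\le s_2$, on which $f(T,\cdot,k_1^*,k_2^*)$ is nonincreasing over $[s_1,S]$, so by \eqref{sec3-eq3.5} $\sup_{s\in[0,S]}f(T,s,k_1,k_2)=f(T,s_1(k_1,k_2),k_1,k_2)$ in a neighbourhood of $(k_1^*,k_2^*)$, with $s_1(k_1,k_2)$ smooth there (a simple root of $F$). Since $(k_1^*,k_2^*)$ is then a stagnation point of $(k_1,k_2)\mapsto f(T,s_1(k_1,k_2),k_1,k_2)$ and $\partial_sf(T,s_1,k_1,k_2)=0$, the envelope identity gives $\partial_{k_i}f(T,s_1,k_1^*,k_2^*)=0$ for $i=1,2$; hence $(s_1,k_1^*,k_2^*)$ is a joint stagnation point of $(s,k_1,k_2)\mapsto f(T,s,k_1,k_2)$. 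Solving these three equations exactly as in the proof of Lemma~\ref{lem 3.1} forces $\sqrt{a}=2b(1-\alpha)(1-\beta)$ and then $f(T,s_1,k_1^*,k_2^*)=0$, so $\sup_{s\in[0,S]}f(T,s,k_1^*,k_2^*)=0$ and, since $f\ge0$, $f(T,s,k_1^*,k_2^*)\equiv0$ on $[0,S]$. This is impossible: by Theorem~\ref{th2.1} the coefficient of $s^{2-2\beta}$ in $f(T,s,k_1^*,k_2^*)$ equals $4(k_1^*)^2T^{2-2\alpha}/[(1-\alpha)^2(1-\beta)^2]>0$, while $s^{2\beta},s,s^{2-2\beta}$ are linearly independent on $(0,S)$ for $\beta\in(\tfrac12,1)$. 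Therefore $s_2(k_1^*,k_2^*)<S$.

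The point requiring most care is not any of these short computations but the recurring step of identifying the single smooth branch with which the nonsmooth map $(k_1,k_2)\mapsto\sup_{s\in[0,S]}f(T,s,k_1,k_2)$ agrees near its stagnation point $(k_1^*,k_2^*)$, which legitimises the differentiation and envelope arguments above; the borderline configurations $s_1=S$ and $s_2=S$ cause no trouble, since $\partial_sf$ vanishes at $s=s_1$, so the two competing branches share the same $(k_1,k_2)$-gradient there.
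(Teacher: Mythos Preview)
Your proof is correct, and Step~1 is essentially equivalent to the paper's Step~1: both rest on the relation $k_1^*=b(k_2^*)(1-\alpha)^2(1-\beta)^2T^{2\alpha-1}S^{2\beta-1}$. The paper simply invokes this formula (it is the content of $\partial_{k_1}f(T,S,\cdot,\cdot)=0$ from Theorem~\ref{th 4.1}) and then uses Vieta's relation $x_1+x_2=b(k_2^*)(1-\alpha)^2(1-\beta)T^{2\alpha-1}$ together with $s_1<s_2$ to get $2s_1^{1-2\beta}>(x_1+x_2)/k_1^*=S^{1-2\beta}/(1-\beta)>2S^{1-2\beta}$, hence $s_1<S$. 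You derive the same $k_1^*$-formula inside a contradiction argument and then compute $s_1$ explicitly; this is a bit longer but is more scrupulous about \emph{why} the stagnation condition for $\sup_s f$ forces $\partial_{k_1}f(T,S,\cdot)=0$ under the hypothesis $s_1\ge S$.

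Step~2 is where the two proofs genuinely diverge. The paper substitutes the same $k_1^*$-formula into $\partial_s f(T,s,k_1^*,k_2^*)$ at $s=S$, which collapses to $2\beta T^{2\alpha}S^{2\beta-1}\bigl(a(k_2^*)-4b^2(k_2^*)(1-\alpha)^2(1-\beta)^2\bigr)$; this is nonnegative by \eqref{sec2-eq2.1}, so $S$ cannot lie in the decreasing interval $(s_1,s_2)$, and combined with Step~1 one gets $s_2<S$. Your route is structural rather than computational: under $s_1<S\le s_2$ the envelope identity makes $(s_1,k_1^*,k_2^*)$ a joint stagnation point of $(s,k_1,k_2)\mapsto f(T,s,k_1,k_2)$, and the same elimination as in Lemma~\ref{lem 3.1} forces $a=4b^2(1-\alpha)^2(1-\beta)^2$ and $f(T,s_1,k_1^*,k_2^*)=0$, whence $f(T,\cdot,k_1^*,k_2^*)\equiv 0$ on $[0,S]$, contradicting $k_1^*>0$. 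The paper's argument is shorter; yours avoids appealing to \eqref{sec2-eq2.1} and cleanly disposes of the borderline case $s_2=S$ (where the paper's inequality $\partial_s f|_{s=S}>0$ would degenerate to $\ge 0$).
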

\begin{proof}
We split the proof in two steps.\\
Step one. It is easy to obtain that,
\begin{equation}\label{sec5-eq5.1}
\frac{2}{s_1^{2\beta-1}}>\frac{1}{s_1^{2\beta-1}}+\frac{1}{s_2^{2\beta-1}}=\frac{x_1+x_2}{k_1}=\frac{b(k_2)(1-\alpha)^2(1-\beta)}{T^{1-2\alpha}k_1}.
\end{equation}
since $s_2>s_1$.
Hence, when $k_1=k_1^*$ and $k_2=k_2^*$,
\begin{equation}\label{sec5-eq5.2}
\begin{split}
\frac{b(k_2^*)(1-\alpha)^2(1-\beta)}{T^{1-2\alpha}k^*_1}&=\frac{(C_1(\alpha)C_1(\beta)-4k^*_2B(1-\alpha,\alpha)B(1-\beta,\beta))(1-\alpha)^2(1-\beta)}{T^{1-2\alpha}k^*_1}\\
&=\frac{S^{1-2\beta}}{1-\beta}>\frac2{S^{2\beta-1}}
\end{split}
\end{equation}
for $\beta\in(\frac12,1)$. This proves that $s_1(k_1^*,k_2^*)<S$.\\
Step two.
\begin{align*}
&\frac{\partial f(T,s,k_1^*,k_2^*)}{\partial s}
=2\beta a(k_2^*)T^{2\alpha}s^{2\beta-1}-8k^*_1b(k^*_2)T+\frac{8(k^*_1)^2}{(1-\alpha)^2(1-\beta)}T^{2-2\alpha}s^{1-2\beta}\\
&=2\beta a(k_2^*)T^{2\alpha}s^{2\beta-1}-8\frac{b(k_2^*)(1-\alpha)^2(1-\beta)^2}{T^{1-2\alpha}s^{1-2\beta}}b(k^*_2)T+\frac{8(\frac{b(k_2^*)(1-\alpha)^2(1-\beta)^2}{T^{1-2\alpha}s^{1-2\beta}})^2}{(1-\alpha)^2(1-\beta)}T^{2-2\alpha}s^{1-2\beta}\\
&=2\beta T^{2\alpha}s^{2\beta-1}(a(k_2^*)-4b^2(k_2^*)(1-\alpha)^2(1-\beta)^2),
\end{align*}
since,$$
k^*_1=\frac{b(k_2^*)(1-\alpha)^2(1-\beta)^2}{T^{1-2\alpha}S^{1-2\beta}},~~~
\textmd{and}~~~ 4b^2(k_2^*)\leq\frac{a(k_2^*)}{(1-\alpha)^2(1-\beta)^2}.$$ Hence,
$$
\frac{\partial f(T,s,k_1^*,k_2^*)}{\partial s}|_{s=S}>0.
$$
Thus,
$S<s_1$ or $S>s_2$. By step one, we know $S>s_1$, so $s_2(k_1^*,k_2^*)<S$.
\end{proof}

\begin{lemma}\label{lem 5.2}
Denote $$h(k_1,k_2)=f(T,s_1,k_1,k_2)-f(T,S,k_1,k_2).$$
Then the equation $h(k_1,k_2)=0$ has two solutions $k_1^{'}$ and $\overline{k_1}$, which satisfy $0<k_1^{'}<\overline{k_1}$,
$\frac{\partial h}{\partial k_1}|_{k_1=k_1^{'}}>0$ and $\frac{\partial h}{\partial k_1}|_{k_1=\overline{k_1}}=0.$
\end{lemma}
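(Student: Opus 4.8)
The plan is to regard $k_2$ as a fixed parameter in the range where $\Delta>0$ (this forces $b(k_2)>0$ and $a(k_2)>0$, since otherwise the roots $x_1>x_2$ of $F$ from Section~\ref{sec3} would not both be positive and $s_1=s_1(k_1,k_2)$ would not be the quantity used in the excerpt) and to treat $h$ as a smooth function of the single variable $k_1>0$. The cornerstone is a scaling identity read off from Theorem~\ref{th2.1}: for every $\nu>0$,
$$f(T,\nu s,\nu^{2\beta-1}k_1,k_2)=\nu^{2\beta}f(T,s,k_1,k_2).$$
Since $\Delta$, $a(k_2)$, $b(k_2)$ and hence $x_1$ do not involve $k_1$, one has $s_1(k_1,k_2)=x_1^{-1/(2\beta-1)}k_1^{1/(2\beta-1)}$ for all $k_1>0$, so the maximiser scales covariantly, $s_1(\nu^{2\beta-1}k_1,k_2)=\nu\,s_1(k_1,k_2)$, and therefore $\psi(k_1):=f(T,s_1(k_1,k_2),k_1,k_2)$ satisfies $\psi(\lambda k_1)=\lambda^{p}\psi(k_1)$ with $p:=\frac{2\beta}{2\beta-1}\in(2,\infty)$; equivalently $\psi(k_1)=A(k_2)k_1^{p}$ with $A(k_2)=\psi(1)$. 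Here $A(k_2)>0$: the sign analysis of Section~\ref{sec3} makes $s\mapsto f(T,s,k_1,k_2)$ strictly increasing on $(0,s_1)$, whence $f(T,s_1,k_1,k_2)>f(T,s,k_1,k_2)\ge 0$ for $0<s<s_1$. Combining with \eqref{sec3-eq3.1},
$$h(k_1)=A(k_2)k_1^{p}-\frac{4T^{2-2\alpha}S^{2-2\beta}}{(1-\alpha)^2(1-\beta)^2}\,k_1^{2}+8b(k_2)TS\,k_1-a(k_2)T^{2\alpha}S^{2\beta},$$
so $h(0)=-a(k_2)T^{2\alpha}S^{2\beta}<0$, $h'(0)=8b(k_2)TS>0$, $h(k_1)\to+\infty$, and, since $p>2$, $h'''>0$ on $(0,\infty)$. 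Hence $h''$ is strictly increasing with $h''(0^{+})<0$, so it has a unique zero $m>0$, and $h'$ is strictly decreasing on $(0,m)$ and strictly increasing on $(m,\infty)$.

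Next I pin down the root $\overline{k_1}$. Let $\overline{k_1}:=S^{2\beta-1}x_1(k_2)$ be the unique $k_1$ with $s_1(k_1,k_2)=S$; then $h(\overline{k_1})=0$ trivially. Since $s_1(\cdot,k_2)$ solves $\partial_s f(T,s_1(k_1,k_2),k_1,k_2)\equiv 0$ and $\partial_{ss}f\ne 0$ at the non-degenerate ($\Delta>0$) local maximum $s_1$, we may differentiate: the envelope identity gives $\psi'(k_1)=\partial_{k_1}f(T,s_1(k_1,k_2),k_1,k_2)$, which at $k_1=\overline{k_1}$ (where $s_1=S$) equals $\partial_{k_1}f(T,S,\overline{k_1},k_2)=\frac{d}{dk_1}f(T,S,k_1,k_2)\big|_{k_1=\overline{k_1}}$, so $h'(\overline{k_1})=0$. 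A second differentiation, using implicit differentiation of $\partial_s f(T,s_1,k_1,k_2)\equiv 0$, gives
$$h''(k_1)=\frac{8T^{2-2\alpha}}{(1-\alpha)^2(1-\beta)^2}\bigl(s_1(k_1,k_2)^{2-2\beta}-S^{2-2\beta}\bigr)-\frac{\bigl(\partial_{sk_1}f(T,s_1(k_1,k_2),k_1,k_2)\bigr)^{2}}{\partial_{ss}f(T,s_1(k_1,k_2),k_1,k_2)}.$$
At $k_1=\overline{k_1}$ the first term vanishes, and a short computation using $S^{1-2\beta}\overline{k_1}=x_1(k_2)$ together with the formula for $x_1$ from Section~\ref{sec3} shows $\partial_{sk_1}f(T,S,\overline{k_1},k_2)=2\sqrt{\Delta}>0$ while $\partial_{ss}f(T,S,\overline{k_1},k_2)<0$ (as $S$ is a local maximum of $s\mapsto f(T,s,\overline{k_1},k_2)$); hence $h''(\overline{k_1})>0$. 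Therefore $\overline{k_1}>m$, and since $h'$ is increasing on $(m,\infty)$ with $h'(\overline{k_1})=0$, we get $h'(m)<0$; together with $h'(0)>0$ and $h'$ decreasing on $(0,m)$ this produces a unique zero $r_1\in(0,m)$ of $h'$. Consequently $h$ is increasing on $(0,r_1)$, decreasing on $(r_1,\overline{k_1})$ and increasing on $(\overline{k_1},\infty)$, with a local maximum at $r_1$ and a local minimum at $\overline{k_1}$.

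It remains to count the zeros of $h$. Since $h$ decreases on $(r_1,\overline{k_1})$ with $h(\overline{k_1})=0$, necessarily $h(r_1)>0$; then on $(0,r_1)$ the function $h$ increases from $h(0)<0$ to $h(r_1)>0$ with $h'>0$ throughout, so it has exactly one zero $k_1'\in(0,r_1)$ and $\partial_{k_1}h|_{k_1=k_1'}>0$. On $(r_1,\overline{k_1})$ one has $h>0$, and on $(\overline{k_1},\infty)$ one also has $h>0$ (consistently: there $s_1(k_1,k_2)>S$ and $f(T,\cdot,k_1,k_2)$ is increasing on $[0,s_1]$). Hence $h=0$ has exactly the two solutions $0<k_1'<\overline{k_1}$, with $\partial_{k_1}h|_{k_1=k_1'}>0$ and $\partial_{k_1}h|_{k_1=\overline{k_1}}=0$, which is the assertion. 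The one genuinely delicate point — the step I expect to require the computation above rather than a soft argument — is the strict inequality $h''(\overline{k_1})>0$: from $h(\overline{k_1})=h'(\overline{k_1})=0$ alone, $\overline{k_1}$ could a priori be an inflection point of $h$, in which case $h=0$ would have only a single root; once $\psi(k_1)=A(k_2)k_1^{p}$ and $h''(\overline{k_1})>0$ are available the remainder is routine monotonicity bookkeeping.
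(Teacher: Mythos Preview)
Your proof is correct and takes a genuinely different, more conceptual route than the paper. Both arguments begin with the same scaling observation, namely that $\psi(k_1):=f(T,s_1(k_1,k_2),k_1,k_2)=A(k_2)k_1^{p}$ with $p=\tfrac{2\beta}{2\beta-1}$ (the paper writes this as $k_1^{p}\varphi(k_2)$). From there the paper proceeds algebraically: it eliminates $\varphi(k_2)$ from the system $h=0$, $\partial_{k_1}h=0$ to obtain a quadratic \eqref{sec5-eq5.6} with roots $\overline{k_1}>\underline{k_1}$, asserts that $\overline{k_1}$ solves the system, and then rules out $\underline{k_1}$ as a root of $h$ via a somewhat technical computation involving the auxiliary function $\phi(x)=x^{p}(\beta x^{-1}+\beta-1)-\beta-\beta x+1$ on $(0,1)$. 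The existence of $k_1'$ then comes from $h(0)<0<h(\underline{k_1})$.

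Your argument replaces this computation by geometry: you identify $\overline{k_1}$ as the value where $s_1(k_1,k_2)=S$ (and this does coincide with the paper's $\overline{k_1}$, since $S^{2\beta-1}x_1$ is exactly the larger root of \eqref{sec5-eq5.6}), get $h(\overline{k_1})=0$ for free, and obtain $h'(\overline{k_1})=0$ from the envelope identity. The global shape of $h$ is then pinned down by the single observation $h'''>0$ (forcing $h'$ unimodal) together with your computation $h''(\overline{k_1})=-(\partial_{sk_1}f)^2/\partial_{ss}f=4\Delta/(-\partial_{ss}f)>0$, which you correctly flag as the one non-soft step. This buys you a complete picture of $h$ (increase/decrease/increase) and hence exactly two zeros with the stated derivative signs, without ever needing to examine $\underline{k_1}$ or introduce $\phi$. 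The paper's approach, by contrast, yields the explicit formulas for $\overline{k_1}$ and $\underline{k_1}$, which are used nowhere else, at the cost of the $\phi$-analysis.
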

\begin{proof}
By Theorem \ref{th2.1} , we have
\begin{align*}
f(T,s_1,k_1,k_2)&=T^{2\alpha}s_1^{2\beta}a(k_2)-8k_1b(k_2)Ts_1+4k_1^2\frac{T^{2-2\alpha}s_1^{2-2\beta}}{(1-\alpha)^2(1-\beta)^2}\\
&=T^{2\alpha}(\frac{k_1}{x_1})^{\frac{2\beta}{2\beta-1}}a(k_2)-8k_1b(k_2)T(\frac{k_1}{x_1})^{\frac1{2\beta-1}}+4k_1^2\frac{T^{2-2\alpha}(\frac{k_1}{x_1})^{\frac{2-2\beta}{2\beta-1}}}{(1-\alpha)^2(1-\beta)^2}\\
&:=k_1^{\frac{\beta}{\beta-\frac12}}\varphi(k_2)=k_1^{\frac{2\beta}{2\beta-1}}\varphi(k_2).
\end{align*}
Let $h(k_1,k_2)=0$, which implies that
\begin{equation}\label{sec5-eq5.3}
k_1^{\frac{2\beta}{2\beta-1}}\varphi(k_2)=T^{2\alpha}S^{2\beta}a(k_2)-8k_1b(k_2)TS+4k_1^2\frac{T^{2-2\alpha}S^{2-2\beta}}{(1-\alpha)^2(1-\beta)^2}=f(T,S,k_1,k_2).\\
\end{equation}
Differentiating \eqref{sec5-eq5.3} with respect to $k_1$
\begin{equation}\label{sec5-eq5.4}
\frac{2\beta}{2\beta-1}k_1^{\frac{1}{2\beta-1}}\varphi(k_2)=-8b(k_2)TS+8k_1\frac{T^{2-2\alpha}S^{2-2\beta}}{(1-\alpha)^2(1-\beta)^2}.\\
\end{equation}
multiplying by $\frac{2\beta}{(2\beta-1)k_1}$ on both side of \eqref{sec5-eq5.3} leads to
\begin{equation}\label{sec5-eq5.5}
\frac{2\beta}{2\beta-1}k_1^{\frac{1}{2\beta-1}}\varphi(k_2)=\frac{2\beta a(k_2)T^{2\alpha}S^{2\beta}}{(2\beta-1)k_1}-\frac{16\beta b(k_2)TS}{2\beta-1}+8\beta k_1\frac{T^{2-2\alpha}S^{2-2\beta}}{(2\beta-1)(1-\alpha)^2(1-\beta)^2}.\\
\end{equation}
It follows that
\begin{align*}
-8b(k_2)TS&+8k_1\frac{T^{2-2\alpha}S^{2-2\beta}}{(1-\alpha)^2(1-\beta)^2}\\&=\frac{2\beta a(k_2)T^{2\alpha}S^{2\beta}}{(2\beta-1)k_1}
-\frac{16\beta b(k_2)TS}{2\beta-1}+8\beta k_1\frac{T^{2-2\alpha}S^{2-2\beta}}{(2\beta-1)(1-\alpha)^2(1-\beta)^2}.\\
\end{align*}
This implies that
\begin{equation}\label{sec5-eq5.6}
(4\beta-4)T^{2-2\alpha}S^{2-2\beta}k_1^2+4b(k_2)TS(1-\alpha)^2(1-\beta)^2k_1-\beta a(k_2)(1-\alpha)^2(1-\beta)^2T^{2\alpha}S^{2\beta}=0.
\end{equation}
This is a quadratic equation in $k_1$ with the two roots
$$\overline{k_1}=\frac{b(k_2)(1-\alpha)^2(1-\beta)^2+\sqrt{D}}{2(1-\beta)T^{1-2\alpha}S^{1-2\beta}},$$
$$\underline{k_1}=\frac{b(k_2)(1-\alpha)^2(1-\beta)^2-\sqrt{D}}{2(1-\beta)T^{1-2\alpha}S^{1-2\beta}},$$
since
$$D=b^2(k_2)(1-\alpha)^4(1-\beta)^4-\beta a(k_2)(1-\beta)^3(1-\alpha)^2>0.$$
It is easy to check that $\overline{k_1}$ is the solution to the equation set
\begin{displaymath}
\left\{ \begin{array}{ll}
h(k_1,k_2)=0, \\
\frac{\partial h}{\partial k_1}(k_1,k_2)=0.
\end{array} \right.
\end{displaymath}
In the follows, we will prove that $\underline{k_1}$
is not the solution of the equation $h(k_1,k_2)=0$. In fact,
\begin{align*}
&h(\underline{k_1},k_2)=f(T,s_1,\underline{k_1},k_2)-f(T,S,\underline{k_1},k_2)\\
&=x_2^{\frac{2\beta}{2\beta-1}}\varphi(k_2)S^{2\beta}-a(k_2)T^{2\alpha}S^{2\beta}+8b(k_2)TS^{2\beta}x_2-\frac{4T^{2-2\alpha}}{(1-\alpha)^2(1-\beta)^2}x_2^2S^{2\beta}\\
&=S^{2\beta}(x_2^{\frac{2\beta}{2\beta-1}}\varphi(k_2)-a(k_2)T^{2\alpha}+8b(k_2)Tx_2-\frac{4T^{2-2\alpha}}{(1-\alpha)^2(1-\beta)^2}x_2^2)\\
&=\frac{2\beta-1}{1-\beta}S^{2\beta}((\frac{x_2}{x_1})^{\frac{2\beta}{2\beta-1}}(4b(k_2)Tx_1-a(k_2)T^{2\alpha})-(4b(k_2)Tx_2-a(k_2)T^{2\alpha})),\\
\end{align*}
since$$
\varphi(k_2)=a(k_2)T^{2\alpha}x_1^{-\frac{2\beta}{2\beta-1}}-8b(k_2)x_1^{\frac1{2\beta-1}}+\frac{4T^{2-2\alpha}}{(1-\alpha)^2(1-\beta)^2}x_1^{2\beta-2}.
$$

By \eqref{sec3-eq3.3}, we have
$$x_1+x_2=b(k_2)T^{2\alpha-1}(1-\alpha)^2(1-\beta),\qquad x_1x_2=\frac14\beta a(k_2)T^{4\alpha-2}(1-\alpha)^2(1-\beta),$$
since $x_1, x_2$ are the root of equation $\beta a(k_2)T^{2\alpha}-4b(k_2)Tx+\frac{4T^{2-2\alpha}x^2}{(1-\alpha)^2(1-\beta)}=0.$
Thus,
$$\frac{4b(k_2)Tx_1}{a(k_2)T^{2\alpha}}=\beta\frac{x_1+x_2}{x_2},\qquad \frac{4b(k_2)Tx_2}{a(k_2)T^{2\alpha}}=\beta\frac{x_1+x_2}{x_1}.$$
Let $x=\frac{x_2}{x_1}\in(0,1)$. Then
\begin{align*}
(&\frac{x_2}{x_1})^{\frac{2\beta}{2\beta-1}}(4b(k_2)Tx_1-a(k_2)T^{2\alpha})-(4b(k_2)Tx_2-a(k_2)T^{2\alpha})\\
&=a(k_2)T^{2\alpha}((\frac{x_2}{x_1})^{\frac{2\beta}{2\beta-1}}(\frac{\beta(x_1+x_2)}{x_2}-1)-(\frac{\beta(x_1+x_2)}{x_1}-1))\\
&=a(k_2)T^{2\alpha}(x^{\frac{2\beta}{2\beta-1}}(\beta x^{-1}+\beta-1)-\beta-\beta x+1)\\
&:=a(k_2)T^{2\alpha}\phi(x).
\end{align*}
It is easy to check $\phi(x)>0$. In fact, $\phi(0)=1-\beta$, $\phi(1)=0$, and $\phi^{'}(0)=-\beta$, $\phi^{'}(1)=0$
$$\phi^{''}(x)=\frac{2\beta(1-\beta)}{(2\beta-1)^2}x^{\frac{2-2\beta}{2\beta-1}}(x^{\frac{1-2\beta}{2\beta-1}}-1)>0$$
for all $x\in(0,1)$ since $2\beta>1$. This shows that the function $\phi(x)$ is convex on $(0,1)$ and $\phi^{'}(x)$
is increasing strictly on $(0,1)$, which implies $\phi^{'}(x)<0$. It follows that $\phi^{'}(x)$ is decreasing strictly on $(0,1)$ and
$$\phi(x)>\phi(1)=0$$
for all $x\in(0,1)$, Thus, $h(\underline{k_1},k_2)>0$.\\
On the other hand, $h(0,k_2)=-a(k_2)T^{2\alpha}S^{2\beta}<0$, it follows that the equation $$h(k_1,k_2)=f(T,s_1,k_1,k_2)-f(T,S,k_1,k_2)=0$$ admits a root, denoted by $k_1^{'}$, on $(0,\underline{k_1})$.
Noting that the $k_1\mapsto f(T,s_1,k_1,k_2)$ is convex and increasing, we find that the equation
$$h(k_1,k_2)=f(T,s_1,k_1,k_2)-f(T,S,k_1,k_2)=0$$
 admits two roots at most since the function $k_1\mapsto f(T,S,k_1,k_2)$
is a quadratic function. Thus, $k_1^{'}$ is unique in $(0,\underline{k_1})$ and
$\frac{\partial h}{\partial k_1}|_{k_1=k_1^{'}}>0$.
\end{proof}

Now, we consider $\sup_{s\in[0,S]}f(T,s,k_1,k_2)$ at the case $\Delta>0$.

\begin{theorem}\label{th 5.3}
If $\Delta>0$, $k_1^{'}$ is given in Lemma 5.2. We have
$$\max\{f(T,s_1,k_1,k_2),f(T,S,k_1,k_2)\}=f(T,S,k_1^{'},k_2).$$
\end{theorem}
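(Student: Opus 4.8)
The plan is to fix $k_2$ with $\Delta>0$ and regard $g_1(k_1):=f(T,s_1,k_1,k_2)$ and $g_2(k_1):=f(T,S,k_1,k_2)$ as functions of $k_1$ alone, and to show that $k_1\mapsto\max\{g_1(k_1),g_2(k_1)\}$ attains its minimum value $g_2(k_1')=f(T,S,k_1',k_2)$ at $k_1=k_1'$. First I would collect the structural facts from Lemma \ref{lem 5.2} and Theorem \ref{th2.1}: $g_1(k_1)=\varphi(k_2)k_1^{2\beta/(2\beta-1)}$ is convex and strictly increasing (the exponent exceeds $2$ and $\varphi(k_2)>0$), $g_2$ is the convex quadratic of \eqref{sec3-eq3.1} with $g_2(0)=a(k_2)T^{2\alpha}S^{2\beta}>0$, and $h:=g_1-g_2$ has $h(0)<0$, exactly the two positive zeros $k_1'<\overline{k_1}$ of Lemma \ref{lem 5.2}, with $h'(k_1')>0$ and $h'(\overline{k_1})=0$. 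Since $h''$ changes sign once, from negative to positive (because $k_1^{2\beta/(2\beta-1)-2}$ is increasing), $h$ is increasing--decreasing--increasing; together with $h(0)<0$, $h(+\infty)=+\infty$ and the count of two zeros this forces $h<0$ on $(0,k_1')$ and $h\ge0$ on $[k_1',\infty)$, with equality only at $k_1'$ and $\overline{k_1}$. Hence $\max\{g_1,g_2\}=g_2$ on $(0,k_1']$ and $=g_1$ on $[k_1',\infty)$. I would also record that $\overline{k_1}$ is exactly the value of $k_1$ at which $s_1=S$, so that on $[k_1',\overline{k_1}]$ the displayed maximum really is $\sup_{s\in[0,S]}f(T,s,k_1,k_2)$.

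The two easy regions go fast. On $[k_1',\overline{k_1}]$ we have $\max\{g_1,g_2\}=g_1$, which is increasing, so $\ge g_1(k_1')=g_2(k_1')$ by $h(k_1')=0$ (and note $g_1(k_1')=f(T,s_1,k_1',k_2)=f(T,S,k_1',k_2)$ although $s_1(k_1')\ne S$). For $k_1>\overline{k_1}$, where $s_1>S$ and $\sup_{s}f=g_2$, the vertex of $g_2$ lies below $\overline{k_1}$, so $g_2$ is increasing there and $g_2(k_1)\ge g_2(\overline{k_1})=g_1(\overline{k_1})\ge g_1(k_1')$. Everything therefore reduces to the interval $(0,k_1']$, where $\max\{g_1,g_2\}=g_2$: the claim is equivalent to $g_2$ being nonincreasing on $[0,k_1']$, i.e.\ to $V\ge k_1'$, where $V=b(k_2)(1-\alpha)^2(1-\beta)^2T^{2\alpha-1}S^{2\beta-1}$ is the vertex of $g_2$. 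Since $h<0$ precisely on $(0,k_1')$, this is in turn equivalent to $h(V)\ge0$, i.e.\ $g_1(V)\ge g_2(V)$.

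To prove $g_1(V)\ge g_2(V)$ I would compute both sides in closed form. From $F(x_1)=0$ one obtains $g_1(V)=\frac{2\beta-1}{1-\beta}\bigl(V/x_1\bigr)^{2\beta/(2\beta-1)}\bigl(4b(k_2)Tx_1-a(k_2)T^{2\alpha}\bigr)$, while directly $g_2(V)=T^{2\alpha}S^{2\beta}\bigl(a(k_2)-4b^2(k_2)(1-\alpha)^2(1-\beta)^2\bigr)$. Writing $w^2=b^2(k_2)-\beta a(k_2)/\bigl((1-\alpha)^2(1-\beta)\bigr)>0$ gives $\sqrt D=w(1-\alpha)^2(1-\beta)^2$, $\overline{k_1}=\tfrac12\bigl(b(k_2)+w\bigr)(1-\alpha)^2(1-\beta)T^{2\alpha-1}S^{2\beta-1}$, $x_1=\tfrac12\bigl(b(k_2)+w\bigr)(1-\alpha)^2(1-\beta)T^{2\alpha-1}$, and $a(k_2)=\bigl(b(k_2)+w\bigr)\bigl(b(k_2)-w\bigr)(1-\alpha)^2(1-\beta)/\beta$; substituting and setting $w=b(k_2)v$, $\gamma=2\beta-1$, the inequality $g_1(V)\ge g_2(V)$ reduces to the scalar inequality $2\gamma\bigl((1-\gamma)/(1+v)\bigr)^{1/\gamma}\ge\gamma-v$, to be verified on the admissible range of $v$; here the constraint \eqref{sec2-eq2.1}, in the form $v\le\gamma$, and $\Delta>0$, in the form $v>0$, are what one has at one's disposal.

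I expect this last step --- verifying $h(V)\ge0$, equivalently the scalar inequality above --- to be the main obstacle: it is the only genuinely non-soft point, and it is precisely where one must exploit the explicit expressions for $a(k_2)$, $b(k_2)$ and the constraints on them rather than mere convexity considerations. Once it is established, assembling the three regions yields $\min_{k_1>0}\max\{f(T,s_1,k_1,k_2),f(T,S,k_1,k_2)\}=f(T,S,k_1',k_2)$, with the minimum attained at $k_1=k_1'$.
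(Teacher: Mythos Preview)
Your strategy coincides with the paper's: split according to the sign of $h=g_1-g_2$ so that $\max\{g_1,g_2\}=g_2$ on $(0,k_1']$ and $=g_1$ on $[k_1',\infty)$, then use that $g_1$ is increasing and that $g_2$ is decreasing on $(0,k_1']$ to pin the minimum of the maximum at $k_1'$. The paper's entire proof is essentially the single clause ``since $k_1\mapsto f(T,s_1,k_1,k_2)$ is increasing and $f(T,S,k_1,k_2)$ is decreasing for $k_1<k_1'$''; in particular, the claim you flag as the main obstacle --- that the vertex $V$ of the quadratic $g_2$ satisfies $V\ge k_1'$ --- is simply \emph{asserted} there, with no justification. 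So your proposal is already more careful than the paper's own argument: your observation that $\overline{k_1}$ is precisely the value at which $s_1=S$ (hence the displayed maximum equals $\sup_{s\in[0,S]}f(T,s,k_1,k_2)$ on $[k_1',\overline{k_1}]$), your handling of the region $k_1>\overline{k_1}$, and your reduction of $h(V)\ge 0$ to a one-variable inequality in $(v,\gamma)$ all go beyond what the paper supplies. The gap you identify is genuine and is not closed by the paper; if you want a complete proof you must still verify that scalar inequality (or give another argument for $V\ge k_1'$), but you should not expect to find that argument in the paper itself.
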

\begin{proof}
By Lemma \ref{lem 5.1}, we have
$$\sup_{s\in[0,S]}f(T,s,k_1,k_2)=\max\{f(T,s_1,k_1,k_2),f(T,S,k_1,k_2)\}.$$
It follows from Lemma \ref{lem 5.2} that
$$\max\{f(T,s_1,k_1,k_2),f(T,S,k_1,k_2)\}=f(T,s_1,k_1,k_2)1_{\{k_1>k_1^{'}\}}+f(T,S,k_1,k_2)_{\{k_1<k_1^{'}\}},$$
which implies that $$\max\{f(T,s_1,k_1,k_2),f(T,S,k_1,k_2)\}=f(T,S,k_1^{'},k_2),$$
since $k_1\mapsto f(T,s_1,k_1,k_2)$ is increasing and $f(T,S,k_1,k_2)$ is decreasing for $k_1<k_1^{'}$.
\end{proof}
{\bf Remark } 3. By Theorem \ref{th 5.3} and \eqref{sec3-eq3.5}, we have
$$\sup_{s\in[0,S]}f(T,s,k_1,k_2)=f(T,S,k_1^{'},k_2),$$
where $k_1^{'}\in(0,\underline{k_1})$.\\
Use the same method, on the boundary $[0,T]\times S$, there also exists  a $k_1^{''}>0$, such that
$$\sup_{t\in[0,T]}f(t,S,k_1,k_2)=f(T,S,k_1^{''},k_2).$$
From all above, we have
$$\sup_{t\in[0,T],s\in[0,S]}f(t,s,k_1,k_2)=\max\{f(T,S,k_1^{'},k_2),f(T,S,k_1^{''},k_2)\}$$

\begin{theorem}\label{th 5.4}
1)If $f(T,S,k_1^{'},k_2)>f(T,S,k_1^{''},k_2)$. Then minimal value
$$\inf_{\zeta\in\mathcal{K}}\sup_{t\in[0,T],s\in[0,S]}f(t,s,k_1,k_2)$$
is achieved at point $(T,S,k_1^{'},k_2^{'})$ and  equals to $f(T,S,k_1^{'},k_2^{'})$.\\
2)If $f(T,S,k_1^{'},k_2)<f(T,S,k_1^{''},k_2)$. Then minimal value $$\inf_{\zeta\in\mathcal{K}}\sup_{t\in[0,T],s\in[0,S]}f(t,s,k_1,k_2)$$ is achieved at point $(T,S,k_1^{''},k_2^{''})$ and  equals to $f(T,S,k_1^{''},k_2^{''})$.
\end{theorem}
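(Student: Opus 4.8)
The plan is to reduce Theorem~\ref{th 5.4} to a one-variable optimization over the parameter $k_2$, using the machinery already assembled. By Remark~3 (which packages Theorem~\ref{th 5.3} together with \eqref{sec3-eq3.5} and its analogue on the boundary $[0,T]\times S$), for every fixed $k_2>0$ we have
\[
\sup_{t\in[0,T],\,s\in[0,S]}f(t,s,k_1,k_2)=\max\{f(T,S,k_1^{'},k_2),\,f(T,S,k_1^{''},k_2)\},
\]
where $k_1^{'}=k_1^{'}(k_2)$ and $k_1^{''}=k_1^{''}(k_2)$ are the roots produced by Lemma~\ref{lem 5.2} (applied on each of the two boundary segments). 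So the quantity to be minimized over $\zeta\in\mathcal{K}$, i.e.\ over $(k_1,k_2)$, collapses: once $k_2$ is fixed, the inner supremum no longer depends on the free $k_1$, it is just the stated max of two explicit functions of $k_2$. Hence
\[
\inf_{\zeta\in\mathcal{K}}\sup_{t\in[0,T],\,s\in[0,S]}f(t,s,k_1,k_2)=\inf_{k_2>0}\max\{f(T,S,k_1^{'}(k_2),k_2),\,f(T,S,k_1^{''}(k_2),k_2)\}.
\]

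Next I would split into the two cases according to which branch dominates. In case~1), $f(T,S,k_1^{'},k_2)>f(T,S,k_1^{''},k_2)$, so the inner $\max$ equals $g(k_2):=f(T,S,k_1^{'}(k_2),k_2)$, and I must minimize $g$ over $k_2>0$. I would argue that $g$ is $C^1$ in $k_2$ (because $k_1^{'}(k_2)$ depends smoothly on $k_2$, being the simple root of $h(\cdot,k_2)=0$ on $(0,\underline{k_1})$ with $\partial h/\partial k_1\neq 0$ there, so the implicit function theorem applies) and that a stagnation point exists in the interior $k_2>0$. At that stagnation point $k_2^{'}$, using $\frac{dg}{dk_2}=\frac{\partial f}{\partial k_1}\big|_{(T,S,k_1^{'},k_2)}\cdot\frac{\partial k_1^{'}}{\partial k_2}+\frac{\partial f}{\partial k_2}\big|_{(T,S,k_1^{'},k_2)}$, one obtains a single equation characterizing $k_2^{'}$. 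The claim is then that $(k_1^{'},k_2^{'})=(k_1^{'}(k_2^{'}),k_2^{'})$ is the joint stagnation point of $(k_1,k_2)\mapsto f(T,S,k_1,k_2)$ restricted to the constraint surface $h(k_1,k_2)=0$, and that $f$ there gives the infimum. Case~2) is entirely symmetric with $k_1^{''}$, $k_2^{''}$ in place of $k_1^{'}$, $k_2^{'}$.

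To secure that the interior stagnation point really is the global minimum I would complement the first-order analysis with a convexity/coercivity argument. As $k_2\to 0^{+}$, $g(k_2)\to f(T,S,k_1^{'}(0),0)$ with $a(0)=1$, $b(0)=C_1(\alpha)C_1(\beta)>0$, giving a finite positive value; as $k_2\to\infty$, the dominant term $a(k_2)T^{2\alpha}S^{2\beta}$ grows like $k_2^2$ (by the explicit form of $a(k_2)$ in Theorem~\ref{th2.1}), while the cross and quadratic-in-$k_1$ corrections cannot cancel it, so $g(k_2)\to+\infty$; hence $g$ attains a minimum on $(0,\infty)$, and uniqueness of the critical point follows from strict convexity of $k_2\mapsto f(T,S,k_1,k_2)$ for each fixed $k_1$ (recorded in the proof of Theorem~\ref{th 4.1} via $\partial^2 f/\partial k_2^2>0$ and $|\mathbf H|>0$) carried along the smooth curve $k_1=k_1^{'}(k_2)$. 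This pins down the minimizing pair and identifies the value as $f(T,S,k_1^{'},k_2^{'})$ in case~1) and $f(T,S,k_1^{''},k_2^{''})$ in case~2).

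The main obstacle I anticipate is the passage from ``stagnation point of $g$ on the constrained curve'' to ``$(k_1^{'},k_2^{'})$ is a genuine (constrained) minimizer with the asserted value'': one must verify that the reduction via Remark~3 is uniform in $k_2$ (so the $\inf$ and $\sup$ genuinely commute in the way claimed) and that the branch inequality $f(T,S,k_1^{'},k_2)\gtrless f(T,S,k_1^{''},k_2)$ of each case persists in a neighbourhood of the optimal $k_2$ so that $g$ is differentiable there rather than exhibiting a max-of-two-functions kink exactly at the optimum. Handling that kink possibility — showing either that the inequality is strict near the optimum, or, if equality $f(T,S,k_1^{'},k_2)=f(T,S,k_1^{''},k_2)$ occurs, that the optimum is still characterized correctly — is the delicate point; everything else is the differentiation of explicit elementary functions and sign bookkeeping already prefigured in Sections~3--5.
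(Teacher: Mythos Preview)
Your approach is more careful than the paper's, and genuinely different in spirit. The paper's proof is very short: once Remark~3 gives $\sup_{t,s} f = f(T,S,k_1',k_2)$ in case~1), it simply expands this expression using the explicit formulas for $a(k_2)$ and $b(k_2)$ from Theorem~\ref{th2.1}, obtaining a quadratic in $k_2$ with positive leading coefficient $\frac{4T^{2\alpha}S^{2\beta}}{\alpha\beta}B(1-\alpha,2\alpha-1)B(1-\beta,2\beta-1)$, and then reads off the vertex, producing a closed-form expression for $k_2'$ in terms of $k_1'$. Case~2) is handled identically with $k_1''$ in place of $k_1'$. In doing so the paper treats $k_1'$ as a constant independent of $k_2$, whereas you correctly observe that $k_1'=k_1'(k_2)$ depends on $k_2$ through the defining equation $h(k_1,k_2)=0$, and you accordingly bring in the implicit function theorem, a chain-rule derivative for $g(k_2)=f(T,S,k_1'(k_2),k_2)$, and a coercivity argument to secure an interior minimizer. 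Your concern about a possible kink where the two branches cross is likewise something the paper does not address.

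What the paper's approach buys is an explicit closed formula for $k_2'$ (and hence for the optimal $\zeta$); what your approach buys is an honest argument that the claimed point really is a minimizer. Strictly speaking, the paper's quadratic minimization is valid only if one either freezes $k_1'$ or interprets the result as a fixed-point system in $(k_1',k_2')$ to be solved simultaneously, and neither interpretation is spelled out. Your route is the more rigorous one; the paper's is the more computational one, and it leaves exactly the gap you identified as the ``main obstacle.''
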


\begin{proof}
1). If $f(T,S,k_1^{'},k_2)>f(T,S,k_1^{''},k_2)$, then
\begin{align*}
\sup_{t\in[0,T],s\in[0,S]}&f(t,s,k_1,k_2)=f(T,S,k_1^{'},k_2)\\
&=T^{2\alpha}S^{2\beta}a(k_2)-8k^{'}_1b(k_2)TS+4(k^{'}_1)^2\frac{T^{2-2\alpha}S^{2-2\beta}}{(1-\alpha)^2(1-\beta)^2}\\
&=(\frac{4T^{2\alpha}S^{2\beta}}{\alpha\beta}B(1-\alpha,2\alpha-1)B(1-\beta,2\beta-1))k_2^2\\
&\quad+(32k_1^{'}TSB(1-\alpha,\alpha)B(1-\beta,\beta)-\frac{8C_2(\alpha)C_2(\beta)}{\alpha\beta}T^{2\alpha}S^{2\beta})k_2\\
&\quad+T^{2\alpha}S^{2\beta}-8k_1^{'}TSC_1(\alpha)C_1(\beta)+\frac{4(k_1^{'})^2}{(1-\alpha)^2(1-\beta)^2}T^{2-2\alpha}S^{2-2\beta}.
\end{align*}
This is a quadratic equation of $k_2$, and $\frac{4T^{2\alpha}S^{2\beta}}{\alpha\beta}B(1-\alpha,2\alpha-1)B(1-\beta,2\beta-1)>0$. It is easy to find that, when
$$k^{'}_2=\frac{C_2(\alpha)C_2(\beta)-4\alpha\beta k_1^{'}T^{1-2\alpha}S^{1-2\beta}B(1-\alpha,\alpha)B(1-\beta,\beta)}{B(1-\alpha,2\alpha-1)B(1-\beta,2\beta-1)},$$
we have
\begin{align*}
\inf_{\zeta\in\mathcal{K}}\sup_{t\in[0,T]s\in[0,S]}f(t,s,k_1,k_2)&=\inf_{\zeta\in\mathcal{K}}f(T,S,k^{'}_1,k_2)=f(T,S,k_1^{'},k_2^{'}).
\end{align*}
2). If $f(T,S,k_1^{'},k_2)<f(T,S,k_1^{''},k_2)$, we have
\begin{align*}
\inf_{\zeta\in\mathcal{K}}\sup_{t\in[0,T]s\in[0,S]}f(t,s,k_1,k_2)&=\inf_{\zeta\in\mathcal{K}}f(T,S,k^{''}_1,k_2)\\
&=f(T,S,k_1^{''},k_2^{''}),
\end{align*}
where
$$k^{''}_2=\frac{C_2(\alpha)C_2(\beta)-4\alpha\beta k_1^{''}T^{1-2\alpha}S^{1-2\beta}B(1-\alpha,\alpha)B(1-\beta,\beta)}{B(1-\alpha,2\alpha-1)B(1-\beta,2\beta-1)}.$$
This completes the proof.
\end{proof}

\section{Two special cases}\label{sec6}
In this section we consider two special classes of the approximation function $\zeta\in\mathcal{K}$
First, if $k_2=0$, we only consider the boundary $T\times[0,S]$.
\begin{theorem}\label{th 6.1}
Let $\mathcal{K}_1=\{\zeta(y_1,y_2,u_1,u_2)=k(y_1y_2)^{-\frac12\alpha}(u_1u_2)^{-\frac12\beta},k>0\}$.\\
(1)If $C^2_1(\alpha)C^2_1(\beta)-\frac{\beta}{(1-\beta)(1-\alpha)^2}\leq0$, then
$$\inf_{\zeta\in\mathcal{K}_2}\sup_{t\in[0,T],s\in[0,S]}2E(Z^{\alpha,\beta}(t,s)-M_{t,s}(\zeta))^2=T^{2\alpha}S^{2\beta}(1-4C^2_1(\alpha)C^2_1(\beta)(1-\alpha)^2(1-\beta)^2)
$$
with $k=\frac{C_1(\alpha)C_1(\beta)(1-\alpha)^2(1-\beta)^2}{T^{1-2\alpha}S^{1-2\beta}}$.\\
(2)If $C^2_1(\alpha)C^2_1(\beta)-\frac{\beta}{(1-\beta)(1-\alpha)^2}>0$, then
$$\inf_{\zeta\in\mathcal{K}_2}\sup_{t\in[0,T],s\in[0,S]}2E(Z^{\alpha,\beta}(t,s)-M_{t,s}(\zeta))^2=f(T,S,k_1^{'},0)
$$
where $\zeta(y_1,y_2,u_1,u_2)=k_1^{'}(y_1y_2)^{-\frac12\alpha}(u_1u_2)^{-\frac12\beta}$
and $k_1^{'}$ is the smallest root of the equation $f(T,s_1,k,0)-f(T,S,k,0)=0$.
\end{theorem}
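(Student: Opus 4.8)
The plan is to specialize the machinery of Sections~\ref{sec2}--\ref{sec5} to the one-parameter family $\mathcal{K}_1$, i.e.\ to the choice $k_2=0$ in Theorem~\ref{th2.1}. First I would record that $a(0)=1$ and $b(0)=C_1(\alpha)C_1(\beta)$, so that
$$f(t,s,k,0)=t^{2\alpha}s^{2\beta}-8k\,C_1(\alpha)C_1(\beta)\,ts+\frac{4k^2}{(1-\alpha)^2(1-\beta)^2}\,t^{2-2\alpha}s^{2-2\beta}.$$
By Lemma~\ref{lem 3.1} (whose proof is unaffected by taking $k_2=0$, $k_1=k>0$) the supremum of $f(\cdot,\cdot,k,0)$ over $[0,T]\times[0,S]$ is not attained in the interior, and it vanishes on $\{0\}\times[0,S]$ and on $[0,T]\times\{0\}$; hence it is attained on $\{T\}\times[0,S]$ or on $[0,T]\times\{S\}$, and, as indicated before the theorem, I would examine only the boundary $\{T\}\times[0,S]$, the other being identical. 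The key remark is that the discriminant in \eqref{sec3-eq3.3} becomes, at $k_2=0$,
$$\Delta=16T^2\Bigl(C_1^2(\alpha)C_1^2(\beta)-\frac{\beta}{(1-\alpha)^2(1-\beta)}\Bigr),$$
which does \emph{not} depend on $k$; so the sign of $\Delta$ is fixed by the constants alone, and the dichotomy $\Delta\le0$ / $\Delta>0$ is exactly the dichotomy of the theorem.

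For case (1), $\Delta\le0$: by \eqref{sec3-eq3.4}, $\sup_{s\in[0,S]}f(T,s,k,0)=f(T,S,k,0)$ for every $k>0$ (and similarly on $[0,T]\times\{S\}$), hence $\sup_{t\in[0,T],s\in[0,S]}f(t,s,k,0)=f(T,S,k,0)$. As a function of $k$ this is an upward-opening quadratic, so I would simply locate its vertex, $k^{*}=C_1(\alpha)C_1(\beta)(1-\alpha)^2(1-\beta)^2T^{2\alpha-1}S^{2\beta-1}>0$ (the value in the statement), substitute it, and simplify to get $f(T,S,k^{*},0)=T^{2\alpha}S^{2\beta}\bigl(1-4C_1^2(\alpha)C_1^2(\beta)(1-\alpha)^2(1-\beta)^2\bigr)$. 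Since $f(t,s,k,0)=2E(Z^{\alpha,\beta}(t,s)-M_{t,s}(\zeta))^2$, this is the asserted infimum.

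For case (2), $\Delta>0$: I would invoke \eqref{sec3-eq3.5}, according to which, for those $k$ with $s_1<S$, $\sup_{s\in[0,S]}f(T,s,k,0)=\max\{f(T,s_1,k,0),f(T,S,k,0)\}$, the case $s_1\ge S$ being subsumed. Specializing Lemma~\ref{lem 5.2} to $k_2=0$, the function $h(k):=f(T,s_1,k,0)-f(T,S,k,0)$ has $h(0)=-T^{2\alpha}S^{2\beta}<0$ and a smallest positive root $k_1'$ with $h'(k_1')>0$; since $k\mapsto f(T,s_1,k,0)$ is convex and increasing while $k\mapsto f(T,S,k,0)$ is decreasing to the left of its vertex, the argument of Theorem~\ref{th 5.3} gives $\inf_{k>0}\max\{f(T,s_1,k,0),f(T,S,k,0)\}=f(T,S,k_1',0)$. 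Together with the boundary $[0,T]\times\{S\}$ (handled by the mirror argument, as in Section~\ref{sec5}), this yields $\inf_{\zeta\in\mathcal{K}_1}\sup_{t\in[0,T],s\in[0,S]}f(t,s,k,0)=f(T,S,k_1',0)$ with $k_1'$ the smallest root of $f(T,s_1,k,0)-f(T,S,k,0)=0$.

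I expect no essential obstacle: everything reduces to routine algebra once $k_2=0$ is inserted and the case split is read off the (now $k$-independent) discriminant. The step most likely to need genuine attention is, in case (2), verifying that $s_1<S$ at the optimizing value $k_1'$ and that $k_1'$ lies to the left of the vertex of $k\mapsto f(T,S,k,0)$, since only then is the ``$s_1<S$'' branch of \eqref{sec3-eq3.5} and the monotonicity exploited in Theorem~\ref{th 5.3} the operative one; this is the $k_2=0$ instance of Lemma~\ref{lem 5.1}. One also has to confirm that the minimizer $k^{*}$ in (1) and the root $k_1'$ in (2) are strictly positive. (Finally, the symbol $\mathcal{K}_2$ in the statement is an evident misprint for $\mathcal{K}_1$.)
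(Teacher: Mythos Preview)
Your proposal is correct and follows the same approach as the paper: specialize Theorem~\ref{th2.1} to $k_2=0$, compute the resulting discriminant, and read off the two cases from the general analysis of Sections~\ref{sec3}--\ref{sec5}. The paper's own proof is in fact much terser than yours---it records only the formula for $f(t,s,k,0)$ and the discriminant (written there as $\Delta_1=64k^2T^2[C_1^2(\alpha)C_1^2(\beta)-\beta/((1-\beta)(1-\alpha)^2)]$, the discriminant in $s^{2\beta-1}$ rather than in $x=ks^{1-2\beta}$, but with the same sign)---and declares the proof complete, implicitly invoking the earlier sections exactly as you do. Your caveat about checking $s_1<S$ and the position of $k_1'$ relative to the vertex is well taken; the paper does not address it here either, so you are at least as complete.
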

\begin{proof}
For $\zeta\in\mathcal{K}_1$ we have
\begin{align*}
2E(Z^{\alpha,\beta}(t,s)-M_{t,s}(\zeta))^2&=f(t,s,k,0)\\
&=t^{2\alpha}s^{2\beta}-8k_1C_1(\alpha)C_1(\beta)ts+\frac{4(k_1)^2}{(1-\alpha)^2(1-\beta)^2}t^{2-2\alpha}s^{2-2\beta}
\end{align*}
and $\Delta_1=64k^2T^2[C^2_1(\alpha)C^2_1(\beta)-\frac{\beta}{(1-\beta)(1-\alpha)^2}]$, which competes the proof.
\end{proof}

Second, we consider the case $k_1=0$.
\begin{theorem}\label{th 6.2}
Let
\begin{align*}
\mathcal{K}_2=\{\zeta(y_1,y_2,u_1,u_2)&=k(y_1\vee y_2)^{\frac12\alpha}(y_1\wedge y_2)^{-\frac12\alpha}|y_1-y_2|^{\alpha-1}\\
&\quad\cdot(u_1\vee u_2)^{\frac12\beta}(u_1\wedge u_2)^{-\frac12\beta}|u_1-u_2|^{\beta-1},k>0\}
\end{align*}
we have
\begin{align*}
\inf_{\zeta\in\mathcal{K}_2}&\sup_{t\in[0,T],s\in[0,S]}2E(Z^{\alpha,\beta}(t,s)-M_{t,s}(\zeta))^2\\&=(1-\frac{4C^2_2(\alpha)C^2_2(\beta)}{\alpha\beta B(1-\alpha,2\alpha-1)B(1-\beta,2\beta-1)})T^{2\alpha}S^{2\beta}
\end{align*}
with $k^{*}=\frac{C_2(\alpha)C_2(\beta)}{B(1-\alpha,2\alpha-1)B(1-\beta,2\beta-1)}$.
\end{theorem}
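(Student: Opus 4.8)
The plan is to specialize the representation obtained in Theorem \ref{th2.1}. For $\zeta\in\mathcal{K}_2$ we have $k_1=0$, so the formula of Theorem \ref{th2.1} collapses to
$$f(t,s,0,k)=2E(Z^{\alpha,\beta}(t,s)-M_{t,s}(\zeta))^2=a(k)\,t^{2\alpha}s^{2\beta},\qquad t\in[0,T],\ s\in[0,S],$$
with $a(k)=1+\frac{4k^2}{\alpha\beta}B(1-\alpha,2\alpha-1)B(1-\beta,2\beta-1)-\frac{8k}{\alpha\beta}C_2(\alpha)C_2(\beta)$. By Remark 1 following Theorem \ref{th2.1} (equivalently, because $f\ge0$ and $t^{2\alpha}s^{2\beta}>0$) we have $a(k)\ge0$ for every $k$; hence on the rectangle $t^{2\alpha}\le T^{2\alpha}$ and $s^{2\beta}\le S^{2\beta}$ give $f(t,s,0,k)\le a(k)T^{2\alpha}S^{2\beta}$, so
$$\sup_{t\in[0,T],s\in[0,S]}f(t,s,0,k)=f(T,S,0,k)=a(k)\,T^{2\alpha}S^{2\beta}.$$
Unlike the general case of Sections 4--5, here the discriminant $\Delta$ plays no role at all, because the remaining terms $-8k_1b(k_2)ts$ and $4k_1^2(\cdots)$ vanish and $(t,s)\mapsto f(t,s,0,k)$ is monotone in each variable.

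Next I would minimize $a(k)T^{2\alpha}S^{2\beta}$ over $k>0$. Since $T^{2\alpha}S^{2\beta}$ is a fixed positive constant, this reduces to minimizing the one-variable quadratic $k\mapsto a(k)$, whose leading coefficient $\frac{4}{\alpha\beta}B(1-\alpha,2\alpha-1)B(1-\beta,2\beta-1)$ is strictly positive for $\frac12<\alpha,\beta<1$. Therefore $a$ attains its global minimum at its vertex
$$k^{*}=\frac{C_2(\alpha)C_2(\beta)}{B(1-\alpha,2\alpha-1)B(1-\beta,2\beta-1)},$$
and one checks that $k^{*}>0$: $C_2(\alpha),C_2(\beta)>0$ since they are integrals of strictly positive integrands (and $d_\alpha,c_{\alpha'}^2>0$), while the Beta-function values are positive. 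Hence $k^{*}$ lies in the admissible set defining $\mathcal{K}_2$, so the infimum over $\mathcal{K}_2$ is actually attained at $k=k^{*}$.

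Finally, plugging $k=k^{*}$ into $a(\cdot)$ and simplifying (equivalently, completing the square in $k$) gives
$$a(k^{*})=1-\frac{4C_2^2(\alpha)C_2^2(\beta)}{\alpha\beta\,B(1-\alpha,2\alpha-1)B(1-\beta,2\beta-1)},$$
which is nonnegative precisely by the inequality $\frac{4}{\alpha\beta}C_2^2(\alpha)C_2^2(\beta)\le B(1-\alpha,2\alpha-1)B(1-\beta,2\beta-1)$ recorded in Remark 1 of Theorem \ref{th2.1}. Multiplying through by $T^{2\alpha}S^{2\beta}$ yields the asserted value
$$\inf_{\zeta\in\mathcal{K}_2}\ \sup_{t\in[0,T],s\in[0,S]}2E(Z^{\alpha,\beta}(t,s)-M_{t,s}(\zeta))^2=\Bigl(1-\frac{4C_2^2(\alpha)C_2^2(\beta)}{\alpha\beta\,B(1-\alpha,2\alpha-1)B(1-\beta,2\beta-1)}\Bigr)T^{2\alpha}S^{2\beta},$$
attained at $k^{*}$, which completes the proof.

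\textbf{Main obstacle.} There is essentially no analytic difficulty in this case; the argument is a one-dimensional quadratic minimization. The only points that require care are the sign bookkeeping carried over from Theorem \ref{th2.1}: that $a(k)\ge0$ (so the supremum over the rectangle sits at the corner $(T,S)$), that the vertex $k^{*}$ is strictly positive (so it belongs to $\mathcal{K}_2$ and the infimum is attained rather than merely approached), and that $a(k^{*})\ge0$ (so the answer is a legitimate squared-error value). All three follow directly from Theorem \ref{th2.1} and its Remark 1.
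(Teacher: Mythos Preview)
Your proof is correct and follows essentially the same approach as the paper: specialize Theorem~\ref{th2.1} with $k_1=0$ to reduce to $a(k)t^{2\alpha}s^{2\beta}$, take the supremum at the corner $(T,S)$, and then minimize the one-variable quadratic $a(k)$ at its vertex $k^{*}$. If anything, you are more careful than the paper in explicitly justifying that $a(k)\ge0$ (so the supremum sits at $(T,S)$) and that $k^{*}>0$ (so it lies in $\mathcal{K}_2$).
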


\begin{proof}
By Theorem \ref{th2.1}, we have
\begin{align*}
\inf_{\zeta\in\mathcal{K}_2}\sup_{t\in[0,T],s\in[0,S]}2E(Z^{\alpha,\beta}(t,s)-M_{t,s}(\zeta))^2
&=\inf_{\zeta\in\mathcal{K}_2}a(k)T^{2\alpha}S^{2\beta}.\\
\end{align*}
The function
$$a(k)=1+\frac{4k^2}{\alpha\beta}B(1-\alpha,2\alpha-1)B(1-\beta,2\beta-1)-\frac{8k}{\alpha\beta}C_2(\alpha)C_2(\beta)$$
is a quadratic equation in $k$, then
\begin{align*}
\inf_{\zeta\in\mathcal{K}_2}a(k)T^{2\alpha}S^{2\beta}&=a(k^{*})T^{2\alpha}S^{2\beta}\\
&=(1-\frac{4C^2_2(\alpha)C^2_2(\beta)}{\alpha\beta B(1-\alpha,2\alpha-1)B(1-\beta,2\beta-1)})T^{2\alpha}S^{2\beta}
\end{align*}
with $k^{*}=\frac{C_2(\alpha)C_2(\beta)}{B(1-\alpha,2\alpha-1)B(1-\beta,2\beta-1)}$. \\
and
\begin{align*}
\zeta(y_1,y_2,u_1,u_2)&=\frac{C_2(\alpha)C_2(\beta)(y_1\vee y_2)^{\frac12\alpha}(y_1\wedge y_2)^{-\frac12\alpha}|y_1-y_2|^{\alpha-1}}{B(1-\alpha,2\alpha-1)B(1-\beta,2\beta-1)}\\
&\quad\cdot(u_1\vee u_2)^{\frac12\beta}(u_1\wedge u_2)^{-\frac12\beta}|u_1-u_2|^{\beta-1}
\end{align*}
where $y_1,y_2>0$ and $u_1,u_2>0$. This completes the proof.
\end{proof}
{\bf\large Acknowledgements.}  The authors would  like to   thanks
Professor Litan Yan,  for
stimulating discussions.

\begin{center}
\begin{figure}[b]
  \includegraphics[width=0.6 \textwidth]{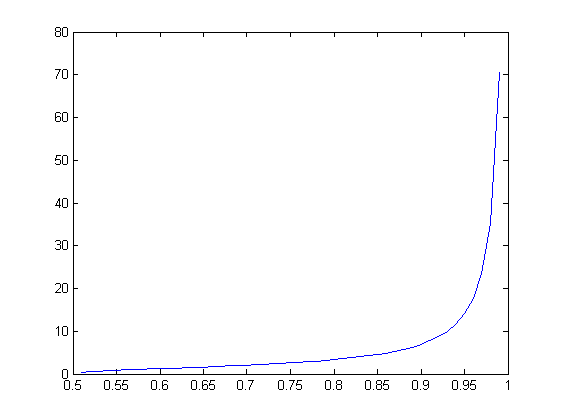}\\
  \caption{$\alpha\mapsto C_1(\alpha)$}\label{a}
\end{figure}
\end{center}
\begin{center}
\begin{figure}[b]
  \includegraphics[width=0.6 \textwidth]{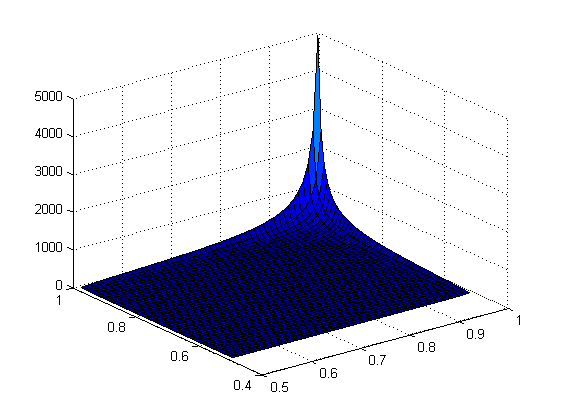}\\
  \caption{$(\alpha,\beta)\mapsto C_1(\alpha)C_1(\beta)$}\label{b}
\end{figure}
\end{center}
\begin{center}
\begin{figure}[b]
  \includegraphics[width=0.6 \textwidth]{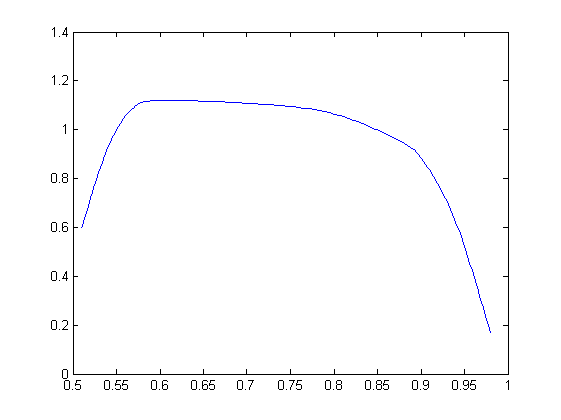}\\
  \caption{$\alpha\mapsto C_2(\alpha)$}\label{c}
\end{figure}
\end{center}
\begin{center}
\begin{figure}[b]
  \includegraphics[width=0.6 \textwidth]{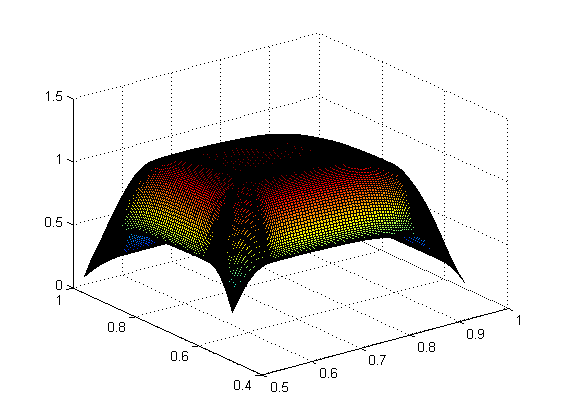}\\
  \caption{$(\alpha,\beta)\mapsto C_2(\alpha)C_2(\beta)$}\label{d}
\end{figure}
\end{center}
\end{document}